\newcommand{\real}{\mathbb{R}}
\newcommand{\SolSet}{\mathcal{S}}
\newcommand{\Neighborhood}{\mathcal{N}}
\newcommand{\Order}{\mathcal{O}}
\providecommand{\abs}[1]{\left\lvert#1\right\rvert}
\providecommand{\norm}[1]{\left\|#1\right\|}
\newtheorem{assumption}{Assumption}[section]
\newtheorem{theorem}{Theorem}[section]
\newtheorem{lemma}{Lemma}[section]
\newtheorem{proposition}{Proposition}[section]
\newtheorem{corollary}{Corollary}[section]
\title{An Infeasible Interior-Point Arc-search Method with Nesterov's Restarting Strategy for Linear Programming Problems}
\date{2023/09/29}
\author{
  Einosuke Iida\thanks{Department of Mathematical and Computing Science,Tokyo Institute of Technology} and
  Makoto Yamashita\thanks{Department of Mathematical and Computing Science,Tokyo Institute of Technology. The research of Makoto Yamashita was partially supported by JSPS KAKENHI (Grant Number: 21K11767)}
}
\begin{document}

\maketitle

\begin{abstract}
  An arc-search interior-point method is a type of interior-point methods that approximates
  the central path by an ellipsoidal arc,
  and it can often reduce the number of iterations.
  In this work,
  to further reduce the number of iterations and computation time for solving linear programming problems,
  we propose two arc-search interior-point methods using
  Nesterov's restarting strategy that is well-known method to accelerate the gradient method with a momentum term.
  The first one generates a sequence of iterations in the neighborhood,
  and we prove that the convergence of the generated sequence to an optimal solution and the computation complexity is polynomial time.
  The second one incorporates the concept of the Mehrotra type interior-point method to improve numerical performance.
  The numerical experiments demonstrate that
  the second one reduced the number of iterations and computational time.
  In particular, the average number of iterations was reduced
  compared to existing interior-point methods
  due to the momentum term.
\end{abstract}

{\bf Keywords:} interior-point method, arc-search, Nesterov's restarting method, linear programming.

\section{Introduction}
\label{section_introduction}
Numerical methods for solving linear programming (LP) problems have been studied well in the literature,
since LP problems have wide-range applications including practical ones and theoretical ones.
In particular, the simplex method, the gradient method and the interior-point method are well-known methods.
The interior-point method was originally proposed by Karmarkar~\cite{karmarkar1984new} in 1984,
and it has been improved by,
for example, the primal-dual interior-point method~\cite{kojima1989primal}
and Mehrotra type predictor-corrector method~\cite{Mehrotra1992}.
For more details,
see Wright~\cite{wright1997primal} and references therein.

The computational efficiency of higher-order
algorithms~\cite{monteiro1990polynomial,Mehrotra1992,gondzio1996multiple,altman1999regularized,kheirfam2018polynomial,espaas2022interior}
using second-order or higher derivatives in framework of interior-point methods has been getting a lot of attention.
Mehrotra type predictor-corrector method~\cite{Mehrotra1992}
can be considered as one of the high-order algorithms.
Lustig et al.~\cite{lustig1992implementing} reported
that Mehrotra type predictor-corrector method was effective in reducing the computation time.
The method has also been implemented in software packages~\cite{yamashita2003implementation,yamashita2012latest}.
However, the higher-order algorithms sometimes had a worse polynomial bound,
or the analysis of computational complexity was not simple.

The arc-search interior-point method~\cite{yang2011polynomial} proposed for linear programming
also employs higher derivatives,
and Yang and Yamashita~\cite{yang2018arc}
proved that the number of iterations in the arc-search infeasible interior-point method can be bounded by $\Order(nL)$,
which is equal to the best known polynomial bound for the infeasible interior-point algorithms.
The arc-search interior-point method has been extended for
quadratic programming with box constraints~\cite{yang2013constrained},
second-order cone programming~\cite{yang2017arc},
semidefinite programming~\cite{zhang2019primal},
complementarity problem~\cite{yuan2018wide},
and general nonlinear programming~\cite{Yamashita2021}.

A main idea of the arc-search interior-point method
is to approximate the central path with an ellipsoidal arc
and this leads to a reduction of the number of iterations.
The central path is a smooth curve that converges to an optimal solution,
but standard interior-point methods use a linear approximation to trace
the central path and apply a line search.
Since the central path is a curve, it can be expected
that an ellipsoidal arc can approximate the central path better than
the line search with a fixed search direction.
In fact, Yang and Yamashita~\cite{yang2018arc} showed through numerical experiments that
the arc-search interior-point method reduces the number of iterations for solving LP test instances.

However, the computational time for each iteration
in the arc-search interior-point method tends to increase compared to line-search methods
due to calculations of the higher-order derivatives.
Therefore, reducing the number of iterations further
without calculating the higher-order derivatives
is one of important
issues to attain better computational time.

On the other hand, Nesterov's restarting strategy~\cite{nesterov1983} is widely employed to
improve the computation time of first-order methods.
Nesterov proved that this technique reduces the worst-case convergence rate of the gradient methods for minimizing smooth convex functions
from $\Order(1/k)$ to $\Order(1/k^2)$.
Dozat~\cite{timothy2016incorporating}
utilized this technique in a framework of stochastic gradient descent methods
in Deep Learning.

For LP problems,
Morshed and Noor-E-Alam~\cite{Morshed2020} examined a method that combines Nesterov's
restarting strategy with the affine scaling method and proved that their method is polynomial time.
In addition, their numerical results showed that their method reduces the computation time compared to a conventional affine scaling method.
To the best of the author's knowledge, however,
Nesterov's restarting strategy has not been introduced in the arc-search interior-point methods.

In this paper,
we propose a new arc-search method for LP problems combined with Nesterov's restarting strategy so that
the arc-search interior-point method can further reduce the number of iterations.
We establish the convergence of a generated sequence of the proposed method to an optimal solution,
and we also show that the proposed method is a polynomial-time algorithm.
Furthermore, to reduce actual computation time, we modify the proposed method
by introducing a concept of the Merhotra type interior-point method.
From numerical experiments on Netlib test instances~\cite{browne1995netlib},
we observed that this modified method achieved better performance than existing methods in terms of both the number of iterations and computation time.
In particular, the number of iterations is successfully reduced by 5.4\% compared to an existing arc-search method.

This paper is organized as follows.
Section~\ref{section_preliminaries} introduces the standard form of LP problems in this paper and gives a brief summary of arc-search interior-point methods.
We describe the proposed method in Section~\ref{section_proposed_method},
and we discuss its convergence properties in Section~\ref{section_theoretical_proof}.
In Section~\ref{section_numerical_experiments},
we modify the proposed method to improve performance in numerical experiments and compare it with existing methods.
Finally,
we give a conclusion of this paper and discuss future directions in Section~\ref{section_conclusion}.

\subsection{Notations}
We denote $i$-th element of a vector $x$ by $x_i$
and the Hadamard product (the element-wise product) of two vectors $x$ and $s$ by
$x \circ s$.
We use $\norm{x}$ and  $\norm{x}_{\infty}$
for the Euclidean norm and the infinity norm of a vector $x$, respectively.
The identity matrix and the vector of all ones of an appropriate dimension are denoted by $I$ and $e$, respectively.
We use a superscript $\top$ to denote the transpose of a vector or a matrix.
The block column vector $\left[x^\top, s^\top\right]^\top$
is shortly expressed by $(x, s)$.
For $x \in \real^n$, we will use the capital character $X \in \real^{n \times n}$
for the diagonal matrix that puts the components of $x$ at its diagonal positions.
\section{Preliminaries}
\label{section_preliminaries}

We consider a linear programming (LP) problem in the following standard form:
\begin{equation}
  \label{problem_main}
  \min_{x \in \real^n} c^{\top} x, \quad \text { subject to } A x = b, \quad x \ge 0,
\end{equation}
where $A \in \real^{m \times n}$, $b \in \real^m$, and $c \in \real^n$.
The corresponding dual problem for (\ref{problem_main}) is
\begin{equation}
  \label{problem_dual}
  \max_{\lambda \in \real^m, \ s \in \real^n} b^{\top} \lambda, \quad \text { subject to } A^{\top} \lambda+s=c, \quad s \ge 0,
\end{equation}
where $\lambda$ is the dual variable vector, and $s$ is the dual slack vector.
Let $\SolSet$ denote the set of the optimal solutions
of (\ref{problem_main}) and (\ref{problem_dual}).
It is well known that
$(x^*, \lambda^*, s^*) \in \SolSet$
if $(x^*, \lambda^*, s^*)$ satisfies the following KKT conditions:
\begin{subequations}
  \label{KKT_conditions}
  \begin{align}
    A x^* & = b \\
    A^\top \lambda^* + s^* & = c \\
    (x^*,s^*) & \ge 0 \\
    x_i^* s_i^* & = 0, \quad i = 1,\ldots,n.
  \end{align}
\end{subequations}

We denote the residuals of the equality constraints in \eqref{problem_main} and \eqref{problem_dual} by
\begin{subequations}
  \label{residuals_constraints}
  \begin{align}
    r_b(x) & = A x - b \label{residual_main} \\
    r_c(\lambda, s) & = A^\top \lambda + s - c, \label{residual_dual}
  \end{align}
\end{subequations}
and the duality measure by
$$
  \mu = \frac{x^T s}{n}.
$$

In this paper,
we make the following assumption for \eqref{problem_main} and \eqref{problem_dual}.
This assumption is a mild one and is used in many papers---for example, see \cite{wright1997primal,yang2020arc}.
\begin{assumption}
  \label{assumption_for_optimal_solution}
	\begin{itemize}
    \item[]
		\item[(A1)] Each of the primal problem \eqref{problem_main} and
		the dual problem \eqref{problem_dual} has interior feasible points.
		\item[(A2)] The row vectors in $A$ are linear independent.
	\end{itemize}
\end{assumption}
Assumption~\ref{assumption_for_optimal_solution} guarantees the existence of
optimal solutions and the boundedness of $\SolSet$---see \cite{yang2020arc}.

\subsection{Arc-search interior-point method} \label{existing-method}

Interior-point methods are iterative methods, and we denote the $k$th iteration by
$(x^k, \lambda^k, s^k)\in \real^n \times \real^m \times \real^n$; in particular,
the initial point is
$(x^0, \lambda^0, s^0)$.

Given a strictly positive iteration
$(x^k, \lambda^k, s^k)$ such that $(x^k, s^k)>0$,
an infeasible predictor-corrector method~\cite{yang2020arc} traces a smooth curve called an approximate center path:
\begin{equation}
  \label{def_ellipsoid}
	C = \left\{(x(t), \lambda(t), s(t)) \mid t \in (0,1] \right\},
\end{equation}
where $(x(t), \lambda(t), s(t))$ is the unique solution of the following system
\begin{subequations}
  \label{curve_to_optimal_solution}
  \begin{align}
    &A x(t) - b = t \ r_b(x^k) \\
    &A^\top \lambda(t) + s(t) - c = t \ r_c(\lambda^k, s^k) \\
    &x(t) \circ s(t) = t \ (x^k \circ s^k) \\
    &(x(t), s(t))>0.
  \end{align}
\end{subequations}
As $t$ approaches 0, $(x(t), \lambda(t), s(t))$ converges to an optimal solution
in $\SolSet$.
Though the existence of the curve $C$ is guaranteed by the implicit function theorem,
it does not admit an analytical form.

The key idea in the arc-search type interior-point algorithms~\cite{Yang2017} is to approximate the curve $C$
with an ellipsoidal arc in the $2 n+m$ dimensional space.
An ellipsoidal approximation of $(x(t), \lambda(t), s(t))$ at
$(x^k, \lambda^k, s^k)$ for an angle $\alpha \in (0, \pi)$ is explicitly given by
$(x(\alpha), \lambda(\alpha), s(\alpha))$ with
\begin{subequations}
  \label{def_x_lambda_s_alpha}
  \begin{align}
    x(\alpha) = x - \dot{x}\sin(\alpha)+\ddot{x}(1-\cos(\alpha)), \\
    \lambda(\alpha) = \lambda-\dot{\lambda}\sin(\alpha)+\ddot{\lambda}(1-\cos(\alpha)), \\
    s(\alpha) = s - \dot{s}\sin(\alpha)+\ddot{s}(1-\cos(\alpha)). \label{sAlpha}
  \end{align}
\end{subequations}
Here,
the first derivative $(\dot{x}, \dot{\lambda}, \dot{s})$ and the
second derivative $(\ddot{x}, \ddot{\lambda}, \ddot{s})$ are the solutions
of the following systems:
\begin{align}
  \label{first_derivative_original}
  \begin{bmatrix}
    A & 0 & 0 \\
    0 & A^\top & I \\
    S^k & 0 & X^k
  \end{bmatrix} \left[\begin{array}{l}
    \dot{x} \\
    \dot{\lambda} \\
    \dot{s}
  \end{array}\right]
  &=
  \left[\begin{array}{c}
    r_b(x^k) \\
    r_c(\lambda^k, s^k) \\
    x^k \circ s^k
  \end{array}\right] \\
  \label{second_derivative_original}
  \begin{bmatrix}
    A & 0 & 0 \\
    0 & A^\top & I \\
    S^k & 0 & X^k
  \end{bmatrix} \left[\begin{array}{l}
    \ddot{x} \\
    \ddot{\lambda} \\
    \ddot{s}
  \end{array}\right]
  &=
  \left[\begin{array}{c}
    0 \\
    0 \\
    - 2 \dot{x} \circ \dot{s}
  \end{array}\right].
\end{align}

In the remainder of this section,
we introduce definitions that are necessary to discuss the convergence.
In the same way as Yang~\cite{yang2020arc},
we introduce the neighborhood of the central path by
\begin{equation}
  \label{def_neighborhood}
  \Neighborhood(\theta) := \left\{
    (x, \lambda, s) \mid (x, s)>0, \quad
    \norm{x \circ s - \mu \mathrm{e}} \le \theta \mu
  \right\}.
\end{equation}
By generating a  sequence in this neighborhood $\Neighborhood(\theta)$,
Yang~\cite{yang2020arc} proved that the algorithm converges in polynomial time.
Similarly to Miao \cite{miao1996two} and Kojima \cite{kojima1996basic},
we also choose a sufficiently large parameter $\rho \ge 1$ and  an initial point $\left(x^{0}, \lambda^{0}, s^{0}\right)$ such that
\begin{equation}
  \label{condition_initial_and_optim_point}
  \left(x^0, \lambda^0, s^0\right) \in \Neighborhood(\theta), \quad
  x^{*} \le \rho x^{0}, \quad s^{*} \le \rho s^{0}
\end{equation}
for some $\left(x^{*}, \lambda^{*}, s^{*}\right) \in \SolSet$.
Let $\omega^f$ and $\omega^o$ be the distances of feasibility and optimality from the initial point as follows:
\begin{align}
  \omega^f & = \min _{x, \lambda, s} \left\{ \begin{aligned}
    \max \left\{ \norm{ \left( X^0 \right)^{-1}\left(x-x^0\right)}_{\infty},
    \norm{\left( S^0 \right)^{-1}\left(s-s^0\right)}_{\infty}\right\} \\
    \mid Ax = b, A^\top \lambda+s=c, (x,s) \ge 0
  \end{aligned} \right\} \label{definition_omega_f} \\
  \omega^o & = \min _{x^{*}, \lambda^{*}, s^{*}} \left\{
    \max \left\{\frac{(x^*)^\top s^0}{(x^0)^\top s^0}, \frac{(s^*)^\top x^0}{(x^0)^\top s^0}, 1\right\}
    \mid \left(x^*, \lambda^*, s^*\right) \in \SolSet
  \right\} \label{definition_omega_o}.
\end{align}

From \eqref{definition_omega_f},
there exists a feasible solution $(\bar{x}, \bar{\lambda}, \bar{s})$ satisfying
\begin{equation}
  \label{upper_feasible_solution}
  \abs{x_i^0-\bar{x}_i} \le \omega^f x_i^0, \quad \abs{s_i^0-\bar{s}_i} \le \omega^f s_i^0.
\end{equation}
In addition, from \eqref{definition_omega_o},
there is an optimal solution $\left(x^{*}, \lambda^{*}, s^{*}\right) \in \SolSet$ satisfying
\begin{equation}
  \label{upper_x_s_solution_product}
  (x^0)^\top s^*, (x^*)^\top s^0 \le \omega^o (x^0)^\top s^0.
\end{equation}

We will use these definitions to discuss the convergence of the proposed method.
\section{The proposed method}
\label{section_proposed_method}
We introduce Nesterov's restarting strategy~\cite{nesterov1983} for the arc-search interior-point method.
Nesterov's restarting strategy has been described in the literature to accelerate convergence in gradient descent algorithms,
and Morshed and Noor-E-Alam~\cite{Morshed2020}
applied Nesterov's restarting strategy to an affine scaling method.

To combine Nesterov's restarting strategy into the arc-search interior-point method,
we employ the momentum term
\begin{equation}
  \label{def_delta}
  \delta(x^k) = x^k - x^{k-1}
\end{equation}
with a strictly positive point $x^0 > 0$,
and we construct $z_k$ by
\begin{equation}
  \label{def_restarted_point}
  z^k =
    \begin{cases}
      x^k + \beta_k \delta(x^k) & k > 0 \ \textrm{and} \ (x^k + \beta_k \delta(x^k), \lambda^k, s^k) \in \Neighborhood(\theta) \\
      x^k & \textrm{otherwise},
    \end{cases}
  \end{equation}
where $\beta_k \ge 0$ is a weight of the momentum term which will be calculated by
\begin{equation}
  \label{def_beta_for_proof}
  \beta_k = \min \left\{
    \frac{\beta}{\norm{ X_k^{-1}\delta(x^k) }_\infty},
    \min \left\{\begin{aligned}
      & \abs{ \frac{r_b(x^k)_j}{r_b(x^k)_j - r_b(x^{k-1})_j}} \\
      & \text{for} \ j \in \left\{r_b(x^k)_j - r_b(x^{k-1})_j \ne 0\right\}
    \end{aligned}
    \right\}
  \right\}.
\end{equation}
Here, $\beta \in (0, 1)$ is a parameter that is given before starting algorithms.
In \eqref{def_restarted_point},
$z^k$ with the momentum $\delta(x^k)$ is adopted if it is in the neighborhood $\Neighborhood(\theta)$.
We use this $z^k$ instead of  $x^k$ to compute the arc in each iteration.
The first and the second derivatives at $(z^k, \lambda^k, s^k)$ with respect to $t$ are given by
\begin{align}
  \label{first_derivative}
  \begin{bmatrix}
    A & 0 & 0 \\
    0 & A^\top & I \\
    S^k & 0 & Z^k
  \end{bmatrix} \left[\begin{array}{c}
    \dot{z} \\
    \dot{\lambda} \\
    \dot{s}
  \end{array}\right]
  & =
  \left[\begin{array}{c}
    r_b(z^k) \\
    r_c(\lambda^k, s^k) \\
    z^k \circ s^k
  \end{array}\right] \\
  \label{second_derivative}
  \begin{bmatrix}
    A & 0 & 0 \\
    0 & A^\top & I \\
    S^k & 0 & Z^k
  \end{bmatrix} \left[\begin{array}{c}
    \ddot{z} \\
    \ddot{\lambda} \\
    \ddot{s}
  \end{array}\right]
  & =
  \left[\begin{array}{c}
    0 \\
    0 \\
    - 2 \dot{z} \circ \dot{s},
  \end{array}\right],
\end{align}
and we denote the duality measure at $(z^k, \lambda^k, s^k)$ by
\begin{equation}
  \label{def_duality_measure_with_restarting_point}
  \mu_k^z := \frac{(z^k)^\top s^k}{n}.
\end{equation}

A framework of the proposed method is given as~Algorithm \ref{algo_arc_restarting_for_proof}.
\begin{algorithm}[H]
  \caption{Arc-search interior-point method with Nesterov's restarting strategy}
  \label{algo_arc_restarting_for_proof}
  \begin{algorithmic}[1]
    \renewcommand{\algorithmicrequire}{\textbf{Input:}}
    \renewcommand{\algorithmicensure}{\textbf{Output:}}
    \Require a neighborhood parameter $\theta \in (0, 1/(2 + \sqrt{2}))$,
      an initial point $(x^0, \lambda^0, s^0)  \in \Neighborhood(\theta)$,
      restarting parameter $\beta \in (0, 1)$,
      and a stopping threshold $\epsilon \in (0, 1)$.
    \State If a stopping criterion is satisfied,
      then output $(x^k, \lambda^k, s^k)$ as an optimal solution $(x^*, \lambda^*, s^*)$ and stop.
    \State Set $\beta_k$ by \eqref{def_beta_for_proof}.
    \State  Set $z^k$ by \eqref{def_restarted_point} and $\mu_k^z$ by \eqref{def_duality_measure_with_restarting_point}.
    \State Compute $(\dot{z}, \dot{\lambda}, \dot{s})$ and
      $(\ddot{z}, \ddot{\lambda}, \ddot{s})$ using \eqref{first_derivative} and \eqref{second_derivative}, respectively.
    \State Find the largest $\alpha_k \in (0, \pi/2]$ such that for all $\alpha \in (0, \alpha_k]$, the following inequalities hold:
      \begin{subequations}
        \label{condition_step_size_for_proof}
        \begin{align}
          x(\alpha) = z^{k}-\dot{z} \sin (\alpha)+\ddot{z}(1-\cos (\alpha)) & > 0 \\
          s(\alpha) = s^{k}-\dot{s} \sin (\alpha)+\ddot{s}(1-\cos (\alpha)) & > 0 \\
          \norm{(x(\alpha) \circ s(\alpha))-(1-\sin (\alpha)) \mu_k^z e } & \le 2 \theta(1-\sin (\alpha)) \mu_k^z
          \label{condition_step_size_neighborhood}
        \end{align}
      \end{subequations}
    \State Set
      \begin{subequations}
        \label{alpha_update_formula}
        \begin{align}
          x(\alpha_k) & = z^k-\dot{z} \sin \left(\alpha_k\right)+\ddot{z}\left(1-\cos \left(\alpha_k\right)\right) \label{x_alpha_update_formula} \\
          \lambda(\alpha_k) & = \lambda^{k}-\dot{\lambda} \sin \left(\alpha_k\right)+\ddot{\lambda}\left(1-\cos \left(\alpha_k\right)\right) \\
          s(\alpha_k) & = s^k-\dot{s} \sin \left(\alpha_k\right)+\ddot{s}\left(1-\cos \left(\alpha_k\right)\right)
        \end{align}
      \end{subequations}
    \State Calculate $(\Delta x, \Delta \lambda, \Delta s)$ by solving
      \begin{equation}
        \label{corrector_calculation}
        \left[\begin{array}{ccc}
          A & 0 & 0 \\
          0 & A^\top & I \\
          S\left(\alpha_k\right) & 0 & X\left(\alpha_k\right)
        \end{array}\right]\left[\begin{array}{c}
          \Delta x \\
          \Delta \lambda \\
          \Delta s
        \end{array}\right]=\left[\begin{array}{c}
          0 \\
          0 \\
          \left(1-\sin \left(\alpha_k\right)\right) \mu_k e-x\left(\alpha_k\right) \circ s\left(\alpha_k\right)
        \end{array}\right].
      \end{equation}
    \State Update
      \begin{equation}
        \label{corrector_update_formula}
        \left(x^{k+1}, \lambda^{k+1}, s^{k+1}\right)=\left(x(\alpha_k), y(\alpha_k), s(\alpha_k)\right)+(\Delta x, \Delta \lambda, \Delta s)
      \end{equation}
      and $\mu_{k+1} = (x^{k+1})^\top s^{k+1} / n$.
    \State Set $k \leftarrow k+1$. Go back to Step 1.
  \end{algorithmic}
\end{algorithm}
\section{Theoretical proof}
\label{section_theoretical_proof}
We discuss theoretical aspects of Algorithm~\ref{algo_arc_restarting_for_proof},
in particular,
we focus on the convergence of the generated sequence and the number of iterations.
In this section,
let $\{(x^k, \lambda^k, s^k)\}$ be the sequence generated by Algorithm~\ref{algo_arc_restarting_for_proof}.

To compute the first and the second derivatives at $(z^k, \lambda^k, s^k)$
uniquely by \eqref{first_derivative} and \eqref{second_derivative},
the following matrix must be nonsingular for all $k$:
\begin{equation}
  \label{matrix_need_nonsingularity}
  \begin{bmatrix}
    A & 0 & 0 \\
    0 & A^\top & I \\
    S^k & 0 & Z^k
  \end{bmatrix}.
\end{equation}
Since $A$ is full rank matrix from Assumption~\ref{assumption_for_optimal_solution}
and $S^k$ is a positive diagonal matrix due to  Proposition~\ref{proposition_s_bounded_below_away_from_zero} below,
the nonsingularity of \eqref{matrix_need_nonsingularity} is ensured.
\begin{proposition}
  \label{proposition_s_bounded_below_away_from_zero}
  There exists $L_0 > 0$ such that $s_i^k \ge L_0 \mu_k$ for each $i=1,\dots,n$.
\end{proposition}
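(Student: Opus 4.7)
The plan is to deduce the lower bound on $s_i^k$ from componentwise centrality together with a uniform upper bound on $x_i^k$. Because $(x^k, \lambda^k, s^k) \in \Neighborhood(\theta)$, the inequality $\norm{x^k \circ s^k - \mu_k e} \le \theta \mu_k$ implies $\abs{x_i^k s_i^k - \mu_k} \le \theta \mu_k$ for every $i$, so $x_i^k s_i^k \ge (1-\theta) \mu_k$. Rearranging gives $s_i^k \ge (1-\theta)\mu_k / x_i^k$, so it suffices to produce a constant $M > 0$, independent of $k$, such that $x_i^k \le M$ for all indices $i$ and all iterations $k$; then we may take $L_0 := (1-\theta)/M$.

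For the uniform upper bound I would adapt the classical infeasible interior-point argument used in~\cite{wright1997primal,yang2020arc}. Since each arc step shrinks both residuals by the factor $(1-\sin\alpha_k)$, one may write $r_b(x^k) = \nu_k r_b(x^0)$ and $r_c(\lambda^k, s^k) = \nu_k r_c(\lambda^0, s^0)$ with $\nu_k \in (0,1]$ a running product of $(1-\sin\alpha_j)$'s. Combining the iterate with the feasible reference $(\bar{x}, \bar{\lambda}, \bar{s})$ from~\eqref{upper_feasible_solution} and the optimum $(x^*, \lambda^*, s^*)$ from~\eqref{condition_initial_and_optim_point}, form
\begin{equation*}
  u := \nu_k \bar{x} + (1-\nu_k) x^* - x^k, \qquad v := \nu_k \bar{s} + (1-\nu_k) s^* - s^k.
\end{equation*}
These satisfy $Au = 0$ and $v \in \mathrm{Range}(A^\top)$ plus a common constant, so $u^\top v = 0$. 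Expanding this orthogonality and invoking~\eqref{upper_feasible_solution}, \eqref{upper_x_s_solution_product}, and the centrality bound $x_i^k s_i^k \ge (1-\theta)\mu_k$ yields an estimate of the form $(x^k)^\top s^0 + (x^0)^\top s^k \le C (x^0)^\top s^0$, where $C$ depends only on $\rho$, $\omega^f$, $\omega^o$, and $\theta$. Since $s^0 > 0$ componentwise, this produces $x_i^k \le M := C (x^0)^\top s^0 / \min_i s_i^0$, and the proposition follows.

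The main technical obstacle is checking that this bookkeeping survives the momentum modification. Because the derivatives~\eqref{first_derivative}--\eqref{second_derivative} are computed at $(z^k, \lambda^k, s^k)$, the post-arc residual is $(1-\sin\alpha_k) r_b(z^k)$ rather than $(1-\sin\alpha_k) r_b(x^k)$, and one must verify that the residual-scaling identity $r_b(x^k) = \nu_k r_b(x^0)$ still holds. The definition~\eqref{def_beta_for_proof} of $\beta_k$ was engineered precisely for this: since $z^k = (1+\beta_k) x^k - \beta_k x^{k-1}$ gives $r_b(z^k)_j = r_b(x^k)_j + \beta_k\bigl(r_b(x^k)_j - r_b(x^{k-1})_j\bigr)$, the second term in~\eqref{def_beta_for_proof} forces $r_b(z^k)_j$ to remain a nonnegative multiple of $r_b(x^k)_j$ coordinatewise. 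Inducting on $k$, each $r_b(x^k)_j$ is itself a nonnegative multiple of $r_b(x^0)_j$, so the classical boundedness argument transfers with only a harmless constant adjustment that absorbs the $(1+\beta_k)$ factors.
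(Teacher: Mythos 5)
Your overall strategy is exactly the paper's: combine the neighborhood bound $x_i^k s_i^k \ge (1-\theta)\mu_k$ with a uniform upper bound on $x^k$ (the paper's Lemma~\ref{lemma_upper_x}, which gives $\norm{x^k}_1 \le \kappa$) and set $L_0 = (1-\theta)/\kappa$. However, the central step of your boundedness argument does not go through as written. You form $u = \nu_k \bar{x} + (1-\nu_k)x^* - x^k$ with $\bar{x}$ the \emph{feasible} reference point from \eqref{upper_feasible_solution} and claim $Au = 0$; but since $A\bar{x} = Ax^* = b$, this gives $Au = b - Ax^k = -r_b(x^k)$, which is nonzero for an infeasible iterate, so the orthogonality $u^\top v = 0$ fails precisely where it is needed. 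The correct construction (the paper's \eqref{def_hat_x_lambda_s} and the first display of Lemma~\ref{lemma_upper_x}) takes the convex combination with the \emph{initial} point: $\hat{x} = \nu_k^\tau x^0 + (1-\nu_k^\tau)x^*$, so that $A\hat{x} - b = \nu_k^\tau r_b(x^0) = r_b(x^k)$ and hence $A(\hat{x}-x^k)=0$; the feasible point $\bar{x}$ and $\omega^f$ belong to a different lemma (the bound on $\norm{(D^k)^{-1}\dot{z}}$).

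A second, related inaccuracy: the primal and dual residuals do not decay at the same rate here. Because the arc is computed at $z^k$, one has $r_b(x^k) = \nu_k^\tau r_b(x^0)$ with $\nu_k^\tau = \prod_i (1-\sin\alpha_i)\tau_i$, while $r_c(\lambda^k,s^k) = \nu_k r_c(\lambda^0,s^0)$ with $\nu_k = \prod_i(1-\sin\alpha_i)$. So the primal and dual convex-combination weights must be $\nu_k^\tau$ and $\nu_k$ respectively, and the resulting estimate is the $\nu_k$-weighted inequality $\nu_k (x^k)^\top s^0 + \nu_k^\tau (x^0)^\top s^k \le \nu_k^\tau\nu_k (x^0)^\top s^0 + \cdots + (x^k)^\top s^k$, not the unweighted form you state. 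One then has to divide by $\nu_k$, use $\mu_k = \nu_k\mu_0$ and $\nu_k^\tau \le \nu_k$ to obtain a $k$-independent bound. Your closing remark that the momentum factors amount to ``a harmless constant adjustment'' glosses over exactly this asymmetry; it does work out, but only with the two distinct decay factors tracked separately, which is the whole content of the paper's Lemma~\ref{lemma_upper_x}. (Also, the centrality bound is not needed in that lemma --- nonnegativity of the iterates suffices.)
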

The proof of Proposition~\ref{proposition_s_bounded_below_away_from_zero} will be given later.
The following lemma indicates that $z^k$ is a positive vector.
\begin{lemma}
  Suppose  $x^k$ and $z^k$ are obtained at the $k$th iteration of Algorithm~\ref{algo_arc_restarting_for_proof}.
  Then, for all $i \in \{1,2,\dots,n\}$,
  \begin{equation}
    \label{upper_and_lower_z_by_x}
    (1-\beta) x^k_i \le z^k_i \le (1+\beta) x^k_i.
  \end{equation}
\end{lemma}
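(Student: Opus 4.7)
The plan is to split on the two branches in the definition \eqref{def_restarted_point} of $z^k$ and, in the nontrivial branch, extract the coordinate-wise bound from the first term inside the minimum in \eqref{def_beta_for_proof}. Throughout, I will use the fact that $x^k>0$, which holds because the iterate $(x^k,\lambda^k,s^k)$ lies in $\Neighborhood(\theta)$, so dividing by $x_i^k$ is legitimate.

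In the trivial branch, when $z^k = x^k$, the claim reads $(1-\beta)x_i^k \le x_i^k \le (1+\beta)x_i^k$, and this is immediate because $\beta \in (0,1)$ together with $x_i^k>0$. So the real content lies in the other branch, where $z^k = x^k + \beta_k\,\delta(x^k)$.

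In that branch I would rewrite the $i$-th coordinate as
\begin{equation*}
z_i^k \;=\; x_i^k\left(1 + \beta_k\,\frac{\delta(x^k)_i}{x_i^k}\right),
\end{equation*}
so that proving the assertion reduces to showing $|\beta_k\,\delta(x^k)_i/x_i^k| \le \beta$ for every $i$. By definition of the matrix $X_k$, one has
\begin{equation*}
\left|\frac{\delta(x^k)_i}{x_i^k}\right| \;\le\; \bigl\|X_k^{-1}\delta(x^k)\bigr\|_\infty,
\end{equation*}
and the first term in the minimum in \eqref{def_beta_for_proof} gives exactly
\begin{equation*}
\beta_k \;\le\; \frac{\beta}{\bigl\|X_k^{-1}\delta(x^k)\bigr\|_\infty}.
\end{equation*}
Multiplying these two inequalities yields $\beta_k\,|\delta(x^k)_i/x_i^k|\le\beta$, which after multiplying by $x_i^k>0$ delivers \eqref{upper_and_lower_z_by_x}.

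The only subtlety is the degenerate situation $\|X_k^{-1}\delta(x^k)\|_\infty = 0$, where the first entry of the minimum in \eqref{def_beta_for_proof} is not literally defined. I would handle this by noting that $\delta(x^k)=0$ in that case, so $z^k = x^k$ regardless of $\beta_k$, and the trivial branch argument applies. No step is really a main obstacle here; the proof is essentially bookkeeping on the definition of $\beta_k$.
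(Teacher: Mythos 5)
Your proposal is correct and follows essentially the same route as the paper: split on the two branches of \eqref{def_restarted_point}, and in the momentum branch combine the coordinate-wise bound $\abs{\delta(x^k)_i}/\norm{X_k^{-1}\delta(x^k)}_\infty \le x_i^k$ (which is just your inequality $\abs{\delta(x^k)_i/x_i^k}\le\norm{X_k^{-1}\delta(x^k)}_\infty$ rearranged) with the first term of the minimum defining $\beta_k$. Your explicit treatment of the degenerate case $\norm{X_k^{-1}\delta(x^k)}_\infty=0$ is a small tidiness bonus the paper omits.
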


\begin{proof}
  If $z^k = x^k$, it is clear.
  When $z^k = x^k + \beta_k \delta(x^k)$, we have
  $$
    \frac{\abs{\delta(x^k)_i}}{\norm{ X_k^{-1}\delta(x^k) }_\infty}
    = \frac{\abs{x^k_i - x^{k-1}_i}}{\max_j\abs{ \frac{x_j^k - x_j^{k-1}}{x_j^k}}}
    \le \frac{\abs{x^k_i - x^{k-1}_i}}{\frac{\abs{x_i^k - x_i^{k-1} }}{\abs{x_i^k}}}
    = \abs{x^k_i} = x_i^k,
  $$
  for all $i$, where the last equality holds from $x^k > 0$.
  The definition of $\beta_k$ \eqref{def_beta_for_proof} indicates
  $$
    \abs{\beta_k \delta(x^k)_i} \le \beta \frac{\abs{\delta(x^k)_i}}{\norm{ X_k^{-1}\delta(x^k) }_\infty} \le \beta x_i^k,
  $$
  so we can get \eqref{upper_and_lower_z_by_x}
  since $\beta \in (0,1)$.
\end{proof}

From \eqref{upper_and_lower_z_by_x}, we obtain $z^k \ge (1-\beta) x^k > 0$.
Furthermore,  from $s^k > 0$, we can also derive the following corollary.

\begin{corollary}
  \label{corollary_z_s_boundedness}
  Suppose  $x^k$, $s^k$ and $z^k$ are obtained at the $k$th iteration of Algorithm~\ref{algo_arc_restarting_for_proof}.
  Then,
  \begin{equation}
    \label{upper_and_lower_z_s_by_x_s}
    (1-\beta) x^k_i s^k_i \le z^k_i s^k_i \le (1+\beta) x^k_i s^k_i.
  \end{equation}
\end{corollary}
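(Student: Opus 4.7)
The plan is to obtain the corollary as an immediate consequence of the preceding lemma by multiplying through by $s^k_i$. Concretely, the previous lemma gives
\[
  (1-\beta)\, x^k_i \le z^k_i \le (1+\beta)\, x^k_i
\]
for every $i \in \{1,\dots,n\}$, and what remains is only to verify that $s^k_i$ is strictly positive so that the inequalities survive multiplication.

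First I would note that positivity of $s^k$ is already guaranteed by the algorithm: the iterate $(x^k,\lambda^k,s^k)$ produced by Algorithm~\ref{algo_arc_restarting_for_proof} lies in the neighborhood $\Neighborhood(\theta)$ (which demands $(x^k,s^k)>0$), and Proposition~\ref{proposition_s_bounded_below_away_from_zero} gives the sharper quantitative bound $s_i^k \ge L_0 \mu_k > 0$. Either fact alone is enough for the present purpose, and citing the proposition makes the positivity explicit without any circularity, since the proposition is stated (its proof postponed) before the lemma we are using.

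Then I would multiply each component of the chain $(1-\beta)x^k_i \le z^k_i \le (1+\beta)x^k_i$ by the positive scalar $s^k_i$, which preserves the inequalities and yields
\[
  (1-\beta)\, x^k_i s^k_i \le z^k_i s^k_i \le (1+\beta)\, x^k_i s^k_i,
\]
which is precisely \eqref{upper_and_lower_z_s_by_x_s}. There is no real obstacle here: the corollary is a one-line consequence of the lemma, and the only nontrivial ingredient, namely $s^k > 0$, is supplied by the framework of the algorithm together with Proposition~\ref{proposition_s_bounded_below_away_from_zero}.
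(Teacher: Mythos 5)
Your proof is correct and matches the paper's: the corollary is obtained by multiplying the component-wise bounds $(1-\beta)x^k_i \le z^k_i \le (1+\beta)x^k_i$ from the preceding lemma by the positive scalar $s^k_i$, whose positivity the paper likewise takes from the algorithm's framework ($s^k>0$). No issues.
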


\subsection{Convergence of Algorithm~\ref{algo_arc_restarting_for_proof}}
To discuss the convergence of the generated sequence by Algorithm~\ref{algo_arc_restarting_for_proof}, we first evaluate
the residuals of the equality constraints.
Some proofs in this section can be done in similar ways to the reference~\cite{yang2020arc},
but will be described in this paper for the sake of completeness,
since we are using $z^k$ instead of $x^k$.

To prove the monotonic decrement of the residuals in the constraints \eqref{residuals_constraints},
we first show that the signs in the primal constraint remain same.

\begin{lemma}
  \label{lemma_non_change_sign_of_main_constraints}
  Let $x^k$ and $z^k$ be generated by Algorithm \ref{algo_arc_restarting_for_proof}. Then,
  \begin{enumerate}[(i)]
    \item $r_b(x^k)_j \ge r_b(z^k)_j \ge 0$ if $r_b(x^0)_j \ge 0$,
    \item $r_b(x^k)_j \le r_b(z^k)_j \le 0$ if $r_b(x^0)_j \le 0$.
  \end{enumerate}
\end{lemma}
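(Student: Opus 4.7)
The plan is to reduce the claim to two ingredients: (a) an algebraic identity relating $r_b(z^k)$ to $r_b(x^k)$ and $r_b(x^{k-1})$, and (b) a recursion $r_b(x^{k+1}) = (1-\sin\alpha_k)\,r_b(z^k)$ for the iterates generated by the arc and corrector steps. These two facts together will drive an induction on $k$ that proves both (i) and (ii) simultaneously.

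First, because $z^k = x^k + \beta_k(x^k - x^{k-1})$ in the nontrivial branch of \eqref{def_restarted_point} (and $z^k = x^k$ in the other, for which the claim is immediate), linearity of $r_b$ yields
\begin{equation*}
r_b(z^k)_j \;=\; r_b(x^k)_j + \beta_k\bigl(r_b(x^k)_j - r_b(x^{k-1})_j\bigr).
\end{equation*}
The definition \eqref{def_beta_for_proof} gives $\beta_k\,|r_b(x^k)_j - r_b(x^{k-1})_j| \le |r_b(x^k)_j|$ for every index $j$ at which the denominator is nonzero (the case of a zero denominator is vacuous, since then $r_b(z^k)_j = r_b(x^k)_j$). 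Consequently the deviation of $r_b(z^k)_j$ from $r_b(x^k)_j$ is bounded in magnitude by $|r_b(x^k)_j|$, which already forces $r_b(z^k)_j$ to share the sign of $r_b(x^k)_j$.

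Next I would establish the recursion by plugging the arc-search updates \eqref{alpha_update_formula} and the corrector update \eqref{corrector_update_formula} into $A(\cdot) - b$. The first row of \eqref{first_derivative} gives $A\dot z = r_b(z^k)$, the first row of \eqref{second_derivative} gives $A\ddot z = 0$, and the first row of \eqref{corrector_calculation} gives $A\Delta x = 0$. Combining these yields
\begin{equation*}
r_b(x^{k+1}) \;=\; A x(\alpha_k) - b \;=\; r_b(z^k) - \sin(\alpha_k)\,r_b(z^k) \;=\; (1-\sin\alpha_k)\,r_b(z^k),
\end{equation*}
so $r_b(x^{k+1})_j$ is a nonnegative multiple of $r_b(z^k)_j$ and is no larger in magnitude.

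With these two identities in hand, the induction is clean. For the base case $k = 0$ we have $z^0 = x^0$ by \eqref{def_restarted_point}, so $r_b(z^0)_j = r_b(x^0)_j$ trivially satisfies the ordering; applying the recursion gives $0 \le r_b(x^1)_j \le r_b(x^0)_j$ in case (i) (resp.\ the reversed inequalities in case (ii)). For the inductive step in case (i), assume $r_b(x^{k-1})_j \ge r_b(z^{k-1})_j \ge 0$; then the recursion gives $0 \le r_b(x^k)_j \le r_b(x^{k-1})_j$, so $r_b(x^k)_j - r_b(x^{k-1})_j \le 0$ and hence $r_b(z^k)_j \le r_b(x^k)_j$, while the $\beta_k$ bound above gives $r_b(z^k)_j \ge r_b(x^k)_j - |r_b(x^k)_j| = 0$. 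Case (ii) is the mirror image and follows by the same calculation with inequalities reversed, or by applying (i) to the system $-A, -b$. The only real obstacle is being careful about the $j$-indices excluded from the inner minimum in \eqref{def_beta_for_proof}, but as noted these are precisely the indices where $r_b(z^k)_j = r_b(x^k)_j$ automatically, so nothing is lost.
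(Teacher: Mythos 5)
Your proposal is correct and follows essentially the same route as the paper's own proof: it derives the recursion $r_b(x^{k+1}) = (1-\sin\alpha_k)\,r_b(z^k)$ from the first rows of \eqref{first_derivative}, \eqref{second_derivative} and \eqref{corrector_calculation}, uses linearity to write $r_b(z^k)_j = r_b(x^k)_j + \beta_k\bigl(r_b(x^k)_j - r_b(x^{k-1})_j\bigr)$, and closes the induction with the bound on $\beta_k$ from \eqref{def_beta_for_proof}. Your explicit handling of the indices excluded from the inner minimum (where the denominator vanishes and $r_b(z^k)_j = r_b(x^k)_j$ automatically) is a point the paper leaves implicit, but otherwise the arguments coincide.
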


\begin{proof}
  From \eqref{residual_main}, \eqref{corrector_update_formula}, \eqref{x_alpha_update_formula},
  \eqref{second_derivative}, \eqref{corrector_calculation} and \eqref{first_derivative},
  it holds that
  \begin{align*}
    r_b(x^{k+1}) - r_b(z^k) & = A\left(x^{k+1} - z^k\right) = A\left(x(\alpha_k) + \Delta x - z^k\right) \\
    & = A \left(z^k-\dot{z} \sin \left(\alpha_k\right)+\ddot{z}\left(1-\cos \left(\alpha_k\right)\right) + \Delta x - z^k\right) \\
    & = - A \dot{z} \sin(\alpha_k) \\
    & = - r_b(z^k) \sin(\alpha_k),
  \end{align*}
  therefore, we can get
  \begin{equation}
    r_b(x^{k+1}) = r_b(z^k) \left( 1 - \sin(\alpha_k) \right).
    \label{eq_main_residual_decreasing_by_z}
  \end{equation}

  We give the proof by induction on the iteration number $k$.
  If $k=0$ or $z^k = x^k$ in \eqref{def_restarted_point} is satisfied,
  thus both (i) and (ii) hold.
  We assume $\abs{r_b(z^k)_j} \le \abs{r_b(x^k)_j}$ with $k=t$,
  and discuss the case of $k = t+1$ and $z^k = x^k + \beta_k \delta(x^k)$.
  From \eqref{residual_main} and \eqref{def_delta}, we derive
  \begin{align}
    & \ r_b(z^{t+1}) - r_b(x^{t+1}) = A z^{t+1} - A x^{t+1}
     = \beta_{t+1} A \delta(x^{t+1}) \notag \\
    = & \ \beta_{t+1} A (x^{t+1} - x^t)
    = \beta_{t+1} (r_b(x^{t+1}) - r_b(x^t)). \label{diff_residual_main}
  \end{align}

  We consider the case of (i).
  From \eqref{eq_main_residual_decreasing_by_z} and $r_b(x^t)_j \ge r_b(z^t)_j$, we know
  \begin{equation*}
    r_b(x^{t+1})_j - r_b(x^t)_j = r_b(z^t)_j \left( 1 - \sin(\alpha_t) \right) - r_b(x^t)_j
    \le - r_b(x^t)_j \sin(\alpha_t) \le 0,
  \end{equation*}
  thus, we get $r_b(x^{t+1})_j \ge r_b(z^{t+1})_j$ from \eqref{diff_residual_main}.
  In addition,
  we can get $r_b(x^{t+1})_j \ge 0$ from \eqref{eq_main_residual_decreasing_by_z} and $r_b(z^t)_j \ge 0$,
  and
  $$
    \beta_{t+1} \le \frac{\abs{r_b(x^{t+1})_j}}{\abs{r_b(x^{t+1})_j - r_b(x^t)_j}}
  $$
  from \eqref{def_beta_for_proof}, so we obtain
  \begin{align*}
    r_b(z^{t+1})_j & = r_b(x^{t+1})_j + \beta_{t+1} (r_b(x^{t+1})_j - r_b(x^t)_j) \\
    & \ge r_b(x^{t+1})_j + \frac{\abs{r_b(x^{t+1})_j}}{\abs{r_b(x^{t+1})_j - r_b(x^t)_j}} (r_b(x^{t+1})_j - r_b(x^t)_j) \\
    & = r_b(x^{t+1})_j + \frac{r_b(x^{t+1})_j}{r_b(x^t)_j - r_b(x^{t+1})_j} (r_b(x^{t+1})_j - r_b(x^t)_j) \\
    & = 0,
  \end{align*}
  therefore, we can get $r_b(x^{t+1})_j \ge r_b(z^{t+1})_j \ge 0$ and this shows the case of (i).
  The proof for (ii) can be given similarly.
\end{proof}

Next, we prove the convergence of the residuals of constraints.

\begin{theorem}
  \label{theorem_residual_decreasing}
  The residuals \eqref{residuals_constraints} shrink at a rate of at least $(1-\sin(\alpha_k))$ in each iteration,
  more precisely,
  \begin{align*}
    \abs{r_b(x^{k+1})_j} & \le \abs{ r_b(x^k)_j } \left( 1 - \sin(\alpha_k) \right) \quad \forall j \in \{1, \ldots, m\}\\
    r_c(\lambda^{k+1}, s^{k+1}) & = r_c(\lambda^k, s^k) \left( 1 - \sin(\alpha_k) \right).
  \end{align*}
\end{theorem}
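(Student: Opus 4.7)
The plan is to prove the two inequalities separately, handling the primal residual bound as an essentially immediate corollary of the preceding lemma and the dual residual equality by a direct algebraic expansion using the update formulas.

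For the primal part, I would start from identity \eqref{eq_main_residual_decreasing_by_z} established in the proof of Lemma~\ref{lemma_non_change_sign_of_main_constraints}, namely
\begin{equation*}
  r_b(x^{k+1}) = r_b(z^k)(1-\sin(\alpha_k)).
\end{equation*}
Taking absolute values componentwise gives $|r_b(x^{k+1})_j| = |r_b(z^k)_j|(1-\sin(\alpha_k))$, so it suffices to show $|r_b(z^k)_j| \le |r_b(x^k)_j|$. This bound is exactly what Lemma~\ref{lemma_non_change_sign_of_main_constraints} delivers: in case (i) we have $r_b(x^k)_j \ge r_b(z^k)_j \ge 0$, while in case (ii) we have $r_b(x^k)_j \le r_b(z^k)_j \le 0$, and in either case $|r_b(z^k)_j| \le |r_b(x^k)_j|$. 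Combining the two yields the desired primal inequality.

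For the dual part, I would expand the update step \eqref{corrector_update_formula} directly. Writing $(\lambda^{k+1}, s^{k+1}) = (\lambda(\alpha_k), s(\alpha_k)) + (\Delta\lambda, \Delta s)$ and using the second block equation of the corrector system \eqref{corrector_calculation}, which states $A^\top \Delta\lambda + \Delta s = 0$, I obtain
\begin{equation*}
  A^\top \lambda^{k+1} + s^{k+1} = A^\top \lambda(\alpha_k) + s(\alpha_k).
\end{equation*}
Substituting the arc formulas for $\lambda(\alpha_k)$ and $s(\alpha_k)$ from \eqref{alpha_update_formula} gives
\begin{equation*}
  A^\top \lambda^{k+1} + s^{k+1} = (A^\top\lambda^k + s^k) - (A^\top\dot\lambda + \dot s)\sin(\alpha_k) + (A^\top\ddot\lambda + \ddot s)(1-\cos(\alpha_k)).
\end{equation*}
From the second block of \eqref{first_derivative} we have $A^\top\dot\lambda + \dot s = r_c(\lambda^k, s^k)$, and from the second block of \eqref{second_derivative} we have $A^\top\ddot\lambda + \ddot s = 0$. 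Subtracting $c$ from both sides and rearranging yields the claimed identity $r_c(\lambda^{k+1}, s^{k+1}) = r_c(\lambda^k, s^k)(1-\sin(\alpha_k))$.

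There is no real obstacle here, since the hard work has already been done in Lemma~\ref{lemma_non_change_sign_of_main_constraints}; the only subtle point is that the primal residual is not an exact multiple of $(1-\sin(\alpha_k))$ of $r_b(x^k)$ (unlike the dual residual) because the momentum step from $x^k$ to $z^k$ can strictly decrease the magnitude of $r_b$, which is precisely why the statement uses an inequality in the primal case and an equality in the dual case.
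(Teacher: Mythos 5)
Your proposal is correct and follows essentially the same route as the paper's proof: the primal bound is obtained by combining the identity $r_b(x^{k+1}) = r_b(z^k)(1-\sin(\alpha_k))$ with the componentwise bound $\abs{r_b(z^k)_j} \le \abs{r_b(x^k)_j}$ from Lemma~\ref{lemma_non_change_sign_of_main_constraints}, and the dual equality follows from the same algebraic expansion of the update using $A^\top\dot\lambda+\dot s = r_c(\lambda^k,s^k)$, $A^\top\ddot\lambda+\ddot s=0$, and $A^\top\Delta\lambda+\Delta s=0$. No gaps.
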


\begin{proof}
  From Lemma~\ref{lemma_non_change_sign_of_main_constraints},
  $\abs{ r_b(z^t)_j } \le \abs{ r_b(x^t)_j }$ holds regardless of the sign of  $r_b(x^0)_j$.
  Hence, from \eqref{eq_main_residual_decreasing_by_z}, we obtain
  $\abs{ r_b(x^{t+1})_j} = \abs{r_b(z^t)_j }\left(1 - \sin(\alpha_t) \right) \le \abs{ r_b(x^t)_j }\left( 1 - \sin(\alpha_t)\right)$
  for each $j$.

  Lastly, we prove the second part.
  From \eqref{residual_dual}, \eqref{corrector_update_formula}, \eqref{alpha_update_formula},
  \eqref{second_derivative}, \eqref{corrector_calculation} and \eqref{first_derivative}, we know
  \begin{align*}
    r_c(\lambda^{k+1}, s^{k+1}) - r_c(\lambda^k, s^k) = & A^\top \left(\lambda(\alpha_k) + \Delta \lambda - \lambda^k\right) + \left(s(\alpha_k) + \Delta s - s^k\right) \\
    = & -\left(A^\top \dot{\lambda} + \dot{s}\right)\sin(\alpha_k) + \left(A^\top \ddot{\lambda} + \ddot{s}\right)(1-\cos(\alpha_k)) \\
    & + \left(A^\top \Delta \lambda + \Delta s\right) \\
    = & -r_c(\lambda^k, s^k)\sin(\alpha_k).
  \end{align*}
\end{proof}

Owing to the momentum term,
the residual of every primal constraint can shrink at the ratio of $(1-\sin(\alpha_k))$ or faster.
To evaluate the primal residual $r_b(z^k)$ more precisely,
we introduce the following lemma.
\begin{lemma}
  For each $k$, there exists $\tau_k$ such that $0 \le \tau_k \le 1$ and
  \begin{equation}
    r_b(x^{k+1}) = (1 - \sin(\alpha_k)) \tau_k r_b(x^k).
    \label{main_residual_norm_decreasing_by_iteration}
  \end{equation}
\end{lemma}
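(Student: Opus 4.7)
The plan is to induct on $k$, taking as the starting point the identity $r_b(x^{k+1}) = (1-\sin(\alpha_k))\,r_b(z^k)$ already proved as \eqref{eq_main_residual_decreasing_by_z}. All that remains is to express $r_b(z^k)$ as a scalar multiple $\tau_k\,r_b(x^k)$ with $\tau_k \in [0,1]$. To do this cleanly, I will strengthen the inductive statement to: there exists a common scalar $\sigma_k \in [0,1]$, nonincreasing in $k$, such that $r_b(x^k) = \sigma_k\, r_b(x^0)$. This parallelism of all iterate residuals to $r_b(x^0)$ is what lets the componentwise relations collapse into a single scalar relation.

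The base case $k=0$ is trivial with $\sigma_0 = 1$, and for $k=1$ we automatically have $z^0 = x^0$ since the momentum branch of \eqref{def_restarted_point} requires $k > 0$; thus $\tau_0 = 1$ and $\sigma_1 = 1 - \sin(\alpha_0)$. At a general step, if the second branch of \eqref{def_restarted_point} is selected, then $z^k = x^k$, hence $\tau_k = 1$, and $\sigma_{k+1} := (1-\sin(\alpha_k))\sigma_k$ continues the hypothesis. The substantive case is the momentum branch.

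Suppose $z^k = x^k + \beta_k \delta(x^k)$. Using \eqref{def_delta} and linearity of $r_b$ yields $r_b(z^k) = r_b(x^k) + \beta_k\bigl(r_b(x^k) - r_b(x^{k-1})\bigr)$. Inserting the inductive expressions $r_b(x^j) = \sigma_j r_b(x^0)$ for $j = k-1, k$ gives $r_b(z^k) = \tau_k r_b(x^k)$ with $\tau_k = 1 - \beta_k(\sigma_{k-1}/\sigma_k - 1)$, valid whenever $\sigma_k > 0$. The degenerate case $\sigma_k = 0$ makes $r_b(x^k) = 0$, and \eqref{def_beta_for_proof} then forces $\beta_k = 0$ (since the inner minimum contains the ratio $|r_b(x^k)_j/(r_b(x^k)_j - r_b(x^{k-1})_j)| = 0$), so $r_b(z^k) = 0$ and the lemma holds with $\tau_k = 1$.

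The main obstacle is verifying $\tau_k \in [0,1]$ in the generic case. The upper bound $\tau_k \le 1$ is immediate from $\beta_k \ge 0$ and the monotonicity $\sigma_{k-1} \ge \sigma_k$ inherited from the induction. The lower bound $\tau_k \ge 0$ is where the precise cap on $\beta_k$ in \eqref{def_beta_for_proof} earns its keep: under the inductive parallelism, the ratio $|r_b(x^k)_j/(r_b(x^k)_j - r_b(x^{k-1})_j)|$ takes the common value $\sigma_k/(\sigma_{k-1}-\sigma_k)$ for every index $j$ with nonzero denominator, so $\beta_k \le \sigma_k/(\sigma_{k-1}-\sigma_k)$, which rearranges to exactly the required inequality $\beta_k(\sigma_{k-1}/\sigma_k - 1) \le 1$. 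Setting $\sigma_{k+1} := (1-\sin(\alpha_k))\tau_k\sigma_k \in [0,\sigma_k]$ then closes the induction and furnishes the claimed $\tau_k$ for every $k$.
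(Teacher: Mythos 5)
Your proof is correct and follows essentially the same route as the paper's: an induction driven by the identity $r_b(x^{k+1})=(1-\sin(\alpha_k))\,r_b(z^k)$ together with the linear expansion $r_b(z^k)=r_b(x^k)+\beta_k\bigl(r_b(x^k)-r_b(x^{k-1})\bigr)$, with $\beta_k\ge 0$ and the monotone decrease of the residual giving $\tau_k\le 1$ and the cap in \eqref{def_beta_for_proof} giving $\tau_k\ge 0$. The only real difference is presentational: the paper obtains $\tau_k\ge 0$ by citing the componentwise sign-preservation Lemma~\ref{lemma_non_change_sign_of_main_constraints}, whereas you re-derive it inline by noting that under the parallelism $r_b(x^k)=\sigma_k r_b(x^0)$ all the ratios in \eqref{def_beta_for_proof} collapse to the common value $\sigma_k/(\sigma_{k-1}-\sigma_k)$.
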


\begin{proof}
  We give a proof by the induction on the iteration number $k$.
  When $k=0$, $r_b(x^1) = (1 - \sin(\alpha_0)) r_b(x^0)$,
  then \eqref{main_residual_norm_decreasing_by_iteration} is satisfied with $\tau_0 = 1$.
  For a positive integer $t$,
  we assume that \eqref{main_residual_norm_decreasing_by_iteration} holds for any $k$ such that $k \le t$.
  The sign of each component of $r_b(x^t)$ does not change
  due to Lemma~\ref{lemma_non_change_sign_of_main_constraints},
  thus we know  $\tau_k \ge 0$ for $k \le t$.

  If $1 - \sin(\alpha_t) = 0$ or $\tau_t = 0$ hold,
   Theorem~\ref{theorem_residual_decreasing} implies
  $r_b(x^{t^\prime}) = 0$ for all $t^\prime \ge t$.
  Then,
  we can choose arbitrary $\tau_{t^\prime}$ from the range $0 \le \tau_{t^\prime} \le 1$, and we can satisfy \eqref{main_residual_norm_decreasing_by_iteration}.
  From here, therefore, we focus on the case at $k=t+1$ when $1 - \sin(\alpha_{k^\prime}) > 0$ and $\tau_{k^\prime} > 0$ hold for all $k^\prime \le t$.

  From \eqref{eq_main_residual_decreasing_by_z}, we have
  $r_b(x^{t+2}) = (1-\sin(\alpha_{t+1})) r_b(z^{t+1})$.
  Due to \eqref{def_restarted_point} and \eqref{eq_main_residual_decreasing_by_z},
  \begin{align*}
    r_b(z^{t+1}) & = r_b(x^{t+1}) + \beta_{t+1} (r_b(x^{t+1}) - r_b(x^t))\\
    & = r_b(x^{t+1}) + \beta_{t+1} \left(1 - \frac{1}{(1 - \sin(\alpha_t)) \tau_t}\right) r_b(x^{t+1}).
  \end{align*}
  From $0 < 1 - \sin(\alpha_t) \le 1$, $0 < \tau_t \le 1$,
  and \eqref{def_beta_for_proof}, i.e., $\beta_{t+1} \ge 0$, we can get
  \begin{equation*}
    \beta_{t+1} \left(1 - \frac{1}{(1 - \sin(\alpha_t)) \tau_t}\right) \le \beta_{t+1}(1 - 1) = 0.
  \end{equation*}
  Therefore,
  $\tau_{t+1} = \left(1 + \beta_{t+1} - \frac{\beta_{t+1}}{(1 - \sin(\alpha_t)) \tau_t}\right)$ satisfies $\tau_{t+1} \le 1$ and
  \begin{align*}
    r_b(x^{t+2}) & = (1 - \sin(\alpha_{t+1}))r_b(z^{t+1}) \\
    & = (1 - \sin(\alpha_{t+1})) \tau_{t+1} r_b(x^{t+1}).
  \end{align*}
  On the other hand, from Lemma~\ref{lemma_non_change_sign_of_main_constraints}, we know
  $\tau_{t+1} \ge 0$.
  Hence, there exists $\tau_k$ such that $0 \le \tau_k \le 1$ and \eqref{main_residual_norm_decreasing_by_iteration} when $k = t+1$.
\end{proof}

The next lemma shows the reduction speed of the duality measure.

\begin{lemma}
  \label{lemma_mu_decreasing}

  Let the sequence $\{(x^k, \lambda^k, s^k)\}$ be generated by Algorithm~\ref{algo_arc_restarting_for_proof}.
  Then, it holds that
  $$
    \mu_{k+1}=\mu_k \left(1-\sin \left( \alpha_k \right)\right).
  $$
\end{lemma}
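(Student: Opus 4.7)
The plan is to compute $(x^{k+1})^\top s^{k+1}$ directly from the corrector update formula \eqref{corrector_update_formula} and then divide by $n$. The key is that the target in the right-hand side of the corrector system \eqref{corrector_calculation} is deliberately set to $(1-\sin(\alpha_k)) \mu_k e - x(\alpha_k)\circ s(\alpha_k)$, which is exactly what will make the desired identity fall out cleanly after summing entries.

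First, I would expand
\begin{equation*}
x^{k+1} \circ s^{k+1} = (x(\alpha_k) + \Delta x)\circ(s(\alpha_k) + \Delta s) = x(\alpha_k) \circ s(\alpha_k) + S(\alpha_k)\Delta x + X(\alpha_k)\Delta s + \Delta x \circ \Delta s,
\end{equation*}
and take the sum of all components so that
\begin{equation*}
(x^{k+1})^\top s^{k+1} = (x(\alpha_k))^\top s(\alpha_k) + e^\top\!\bigl(S(\alpha_k)\Delta x + X(\alpha_k)\Delta s\bigr) + \Delta x^\top \Delta s.
\end{equation*}
Substituting the third block of \eqref{corrector_calculation} into the middle term eliminates the $(x(\alpha_k))^\top s(\alpha_k)$ contribution and yields $n(1-\sin(\alpha_k))\mu_k$. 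The only remaining piece is $\Delta x^\top \Delta s$.

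Next I would show $\Delta x^\top \Delta s = 0$. The first two blocks of \eqref{corrector_calculation} give $A\Delta x = 0$ and $\Delta s = -A^\top \Delta\lambda$, so $\Delta x^\top \Delta s = -(\Delta x)^\top A^\top \Delta\lambda = -(A\Delta x)^\top \Delta\lambda = 0$. Combining this with the previous display yields
\begin{equation*}
(x^{k+1})^\top s^{k+1} = n(1-\sin(\alpha_k))\mu_k,
\end{equation*}
and the result follows from $\mu_{k+1} = (x^{k+1})^\top s^{k+1}/n$ as defined in Step~8 of Algorithm~\ref{algo_arc_restarting_for_proof}.

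There is no significant obstacle here: the argument is purely algebraic and mirrors the standard Newton-corrector analysis in infeasible interior-point methods. The only place where one might slip is being careful that $\mu_k$ (and not $\mu_k^z$) appears in the right-hand side of \eqref{corrector_calculation}, so that the identity produced is the stated one in terms of $\mu_k$; the use of $z^k$ rather than $x^k$ in constructing the arc is irrelevant to this particular calculation since the corrector equation is written in terms of $x(\alpha_k), s(\alpha_k)$ and $\mu_k$ only.
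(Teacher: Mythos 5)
Your proof is correct and is exactly the standard computation that the paper invokes by omitting its own proof and citing Yang's Lemma~3.1: expand $(x^{k+1})^\top s^{k+1}$, use the third block of \eqref{corrector_calculation} to collapse it to $n(1-\sin(\alpha_k))\mu_k$, and use the first two blocks to get $\Delta x^\top \Delta s = 0$. Your closing remark that the argument is insensitive to the use of $z^k$ matches the paper's own justification for omitting the proof.
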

Lemma~\ref{lemma_mu_decreasing} can be proved in the same way as Yang~\cite[Lemma~3.1]{yang2018arc},
because this lemma does not depend on using $z^k$,
therefore we omit the proof.

For the following discussions, let
\begin{subequations}
  \label{def_mus}
  \begin{align}
    \nu_k = \prod_{i=0}^{k-1} (1 - \sin(\alpha_i)), \label{def_nu}\\
    \nu_k^\tau = \prod_{i=0}^{k-1} (1 - \sin(\alpha_i))\tau_i \label{def_nu_tau}.
  \end{align}
\end{subequations}
From \eqref{main_residual_norm_decreasing_by_iteration}, Theorem~\ref{theorem_residual_decreasing} and Lemma~\ref{lemma_mu_decreasing},
we can get
\begin{subequations}
  \begin{align}
    r_b(z^k) & = \tau_k \nu_k^\tau r_b(x^0), \label{main_residual_norm_decreasing} \\
    r_c(\lambda^k, s^k) & = \nu_k r_c(\lambda^0, s^0), \label{dual_residual_decreasing} \\
    \mu_k & = \nu_k \mu_0. \label{mu_decreasing}
  \end{align}
\end{subequations}

Next,
Lemmas~\ref{lemma_alpha_satisfying_x_s_positive} and \ref{lemma_next_iteration_in_neighborhood} show that Algorithm~\ref{algo_arc_restarting_for_proof} is well-defined,
i.e., it continues the iterations with positive step sizes.

\begin{lemma}
  \label{lemma_alpha_satisfying_x_s_positive}
  Assume $(z^k, \lambda^k, s^k) \in \Neighborhood(\theta)$.
  Then, there is a $\bar{\alpha}_k \in (0, \pi / 2)$
  such that $(x(\alpha), s(\alpha)) > 0$ and \eqref{condition_step_size_neighborhood} hold for any $\alpha \in (0, \bar{\alpha}_k]$.
\end{lemma}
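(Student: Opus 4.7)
The plan is to verify the three conditions of \eqref{condition_step_size_for_proof} separately, treating them as three continuous functions of $\alpha$ that all hold with strict inequality at $\alpha = 0$. Since $(z^k, s^k) > 0$ by assumption and $x(\alpha)$, $s(\alpha)$ from \eqref{def_x_lambda_s_alpha} are continuous in $\alpha$ with $x(0) = z^k$ and $s(0) = s^k$, the positivity conditions $x(\alpha) > 0$ and $s(\alpha) > 0$ will automatically hold on some interval $(0, \bar\alpha_k^{(1)}]$. The whole argument therefore reduces to showing that the neighborhood condition \eqref{condition_step_size_neighborhood} can be satisfied for all sufficiently small $\alpha > 0$.

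To handle \eqref{condition_step_size_neighborhood}, I would first expand $x(\alpha) \circ s(\alpha)$ using \eqref{def_x_lambda_s_alpha}, grouping terms by the coefficients $\sin\alpha$, $1 - \cos\alpha$, $\sin^2\alpha$, $\sin\alpha(1-\cos\alpha)$, and $(1-\cos\alpha)^2$. The crucial simplification uses the third rows of \eqref{first_derivative} and \eqref{second_derivative}, namely
\begin{equation*}
z^k \circ \dot{s} + s^k \circ \dot{z} = z^k \circ s^k, \qquad z^k \circ \ddot{s} + s^k \circ \ddot{z} = -2\,\dot{z} \circ \dot{s}.
\end{equation*}
Substituting these into the $\sin\alpha$ and $(1-\cos\alpha)$ terms yields
\begin{equation*}
x(\alpha)\circ s(\alpha) = (1-\sin\alpha)\, z^k\circ s^k + h(\alpha),
\end{equation*}
where $h(\alpha)$ collects the remaining quadratic-in-$\alpha$ contributions
\begin{equation*}
h(\alpha) = \dot{z}\circ\dot{s}\sin^2\alpha - 2\dot{z}\circ\dot{s}(1-\cos\alpha) - (\dot{z}\circ\ddot{s}+\ddot{z}\circ\dot{s})\sin\alpha(1-\cos\alpha) + \ddot{z}\circ\ddot{s}(1-\cos\alpha)^2.
\end{equation*}

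Subtracting $(1-\sin\alpha)\mu_k^z e$ from both sides and applying the triangle inequality together with the neighborhood hypothesis $\|z^k\circ s^k - \mu_k^z e\|\le \theta\mu_k^z$ gives
\begin{equation*}
\bigl\|x(\alpha)\circ s(\alpha) - (1-\sin\alpha)\mu_k^z e\bigr\| \le (1-\sin\alpha)\,\theta\mu_k^z + \|h(\alpha)\|.
\end{equation*}
Hence to obtain \eqref{condition_step_size_neighborhood} it suffices to find $\alpha>0$ such that $\|h(\alpha)\|\le\theta(1-\sin\alpha)\mu_k^z$. Since $h(\alpha)\to 0$ as $\alpha\to 0^+$ (each term carries a factor of $\sin^2\alpha$ or $1-\cos\alpha$, both $O(\alpha^2)$) while the right-hand side $\theta(1-\sin\alpha)\mu_k^z$ tends to the strictly positive constant $\theta\mu_k^z$ (note $\mu_k^z>0$ because $z^k,s^k>0$), continuity produces some $\bar\alpha_k^{(2)}\in(0,\pi/2)$ on which the inequality holds. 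Taking $\bar\alpha_k := \min\{\bar\alpha_k^{(1)},\bar\alpha_k^{(2)}\}$ finishes the proof.

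The only non-obvious step is keeping track of the algebraic bookkeeping when the Newton-type identities are substituted, so as to isolate exactly the linear-in-$\sin\alpha$ structure $(1-\sin\alpha)\,z^k\circ s^k$; everything else is continuity. I do not expect any real difficulty here, because the factor $2\theta$ on the right of \eqref{condition_step_size_neighborhood} was built precisely to absorb the $O(\alpha^2)$ residual $h(\alpha)$ on top of the $\theta$-slack already available from the neighborhood assumption at $\alpha=0$.
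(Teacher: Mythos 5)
Your proposal is correct and follows essentially the same route as the paper: expand $x(\alpha)\circ s(\alpha)$, substitute the third-row identities of \eqref{first_derivative} and \eqref{second_derivative} to isolate $(1-\sin\alpha)\,z^k\circ s^k$, bound the remainder via the triangle inequality and the neighborhood hypothesis, and conclude by continuity at $\alpha=0$ where the slack is $\theta\mu_k^z>0$. The only cosmetic difference is that you get positivity of $(x(\alpha),s(\alpha))$ directly by continuity from $(x(0),s(0))=(z^k,s^k)>0$, whereas the paper deduces it from the already-established bound $x_i(\alpha)s_i(\alpha)\ge(1-2\theta)(1-\sin\alpha)\mu_k^z>0$; both are valid for the existence claim.
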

\begin{proof}
  We can find
  $s^k \circ \dot{z} + z^k \circ \dot{s} = z^k \circ s^k$ and
  $s^k \circ \ddot{z} + z^k \circ \ddot{s} = -2 \dot{z} \circ \dot{s}$
  in the last rows of (\ref{first_derivative}) and (\ref{second_derivative}), respectively.
  Using $\sin^2(\alpha) - 2 (1 - \cos(\alpha)) = - (1 - \cos(\alpha))^2$, we have
  \begin{equation*}
    \begin{aligned}
      x(\alpha) \circ s(\alpha) = & z^k \circ s^k - (s^k \circ \dot{z} + z^k \circ \dot{s})\sin(\alpha) + (s^k \circ \ddot{z} + z^k \circ \ddot{s})(1 - \cos(\alpha)) \\
      & + \dot{z} \circ \dot{s} \sin^2(\alpha) - (\dot{z} \circ \ddot{s} + \ddot{z} \circ \dot{s}) \sin(\alpha) (1 - \cos(\alpha)) \\
      & + \ddot{z} \circ \ddot{s}(1 - \cos(\alpha))^2 \\
      = & z^k \circ s^k (1 - \sin(\alpha)) + \dot{z} \circ \dot{s} (\sin^2(\alpha) - 2 (1 - \cos(\alpha))) \\
      & - (\dot{z} \circ \ddot{s} + \ddot{z} \circ \dot{s}) \sin(\alpha) (1 - \cos(\alpha)) + \ddot{z} \circ \ddot{s}(1 - \cos(\alpha))^2 \\
      = & z^k \circ s^k (1 - \sin(\alpha)) + (\ddot{z} \circ \ddot{s} - \dot{z} \circ \dot{s})(1 - \cos(\alpha))^2 \\
      & - (\ddot{z} \circ \dot{s} + \dot{z} \circ \ddot{s})\sin(\alpha)(1 - \cos(\alpha)).
    \end{aligned}
  \end{equation*}
  Therefore, it holds that
  \begin{align*}
    & \norm{(x(\alpha) \circ s(\alpha)) - (1-\sin (\alpha)) \mu_k^z e } \\
    = & \|
      z^k \circ s^k (1 - \sin(\alpha)) + (\ddot{z} \circ \ddot{s} - \dot{z} \circ \dot{s})(1 - \cos(\alpha))^2 \\
    & - (\ddot{z} \circ \dot{s} + \dot{z} \circ \ddot{s})\sin(\alpha)(1 - \cos(\alpha))
      - (1-\sin (\alpha)) \mu_k^z e
    \| \\
    = & \|
      (z^k \circ s^k - \mu_k^z e) (1 - \sin(\alpha)) + (\ddot{z} \circ \ddot{s} - \dot{z} \circ \dot{s})(1 - \cos(\alpha))^2 \\
    & - (\ddot{z} \circ \dot{s} + \dot{z} \circ \ddot{s})\sin(\alpha)(1 - \cos(\alpha))
    \| \\
    \le & \theta \mu_k^z (1 - \sin(\alpha))
      + (\norm{\ddot{z} \circ \ddot{s}} + \norm{\dot{z} \circ \dot{s}})\sin^4(\alpha)
      + (\norm{\ddot{z} \circ \dot{s}} + \norm{\dot{z} \circ \ddot{s}})\sin^3(\alpha),
  \end{align*}
  where the last inequality was derived by
  $(z^k, \lambda^k, s^k) \in \Neighborhood(\theta)$ from \eqref{def_restarted_point}
 and  $0 \le 1 - \cos(\alpha) \le \sin^2(\alpha)$ for $\alpha \in (0, \pi/2)$.
  Clearly, if
  \begin{equation}
    \label{q_negativity}
    q(\alpha) := - \theta \mu_k^z (1 - \sin(\alpha))
      + (\norm{\ddot{z} \circ \ddot{s}} + \norm{\dot{z} \circ \dot{s}})\sin^4(\alpha)
      + (\norm{\ddot{z} \circ \dot{s}} + \norm{\dot{z} \circ \ddot{s}})\sin^3(\alpha)
    \le 0
  \end{equation}
  then \eqref{condition_step_size_neighborhood} holds.
  In fact,
  due to $q(0) = -\mu_k^z \theta < 0$,
  $q(\pi/2) > 0$ and continuity,
  there exists a $\bar{\alpha}_k \in (0, \pi/2]$ such that \eqref{q_negativity} is satisfied at any $\alpha \in (0, \bar{\alpha}_k]$,
  This guarantees (\ref{condition_step_size_neighborhood}), thus we can derive that
  $$
    x_i(\alpha) s_i(\alpha) \ge (1 - 2\theta)(1 - \sin(\alpha)) \mu_k^z > 0,
    \quad \forall \theta \in [0, 0.5), \quad \forall \alpha \in (0, \bar{\alpha}_k].
  $$
  Since $x(\alpha)$ and $s(\alpha)$ are continuous with respect to $\alpha$ and $(x(0), s(0)) = (x^k, s^k) > 0$,
  we can get $(x(\alpha), s(\alpha)) > 0$ for any $\alpha \in (0, \bar{\alpha}_k]$.
\end{proof}

\begin{lemma}
  \label{lemma_next_iteration_in_neighborhood}
  Assume $(z^k, \lambda^k, s^k) \in \Neighborhood(\theta)$.
  If $\theta \le 1 / (2+\sqrt{2})$,
  $\left(x^{k+1}, \lambda^{k+1}, s^{k+1}\right) \in \Neighborhood(\theta)$
  and $x^{k+!}, s^{k+1} > 0$.
\end{lemma}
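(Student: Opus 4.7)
The plan is to treat the corrector step \eqref{corrector_calculation} as a standard damped-Newton centering step and reduce the claim to a bound on the second-order cross term $\Delta x \circ \Delta s$. First I would expand $x^{k+1}\circ s^{k+1}=(x(\alpha_k)+\Delta x)\circ(s(\alpha_k)+\Delta s)$ and substitute the last block row of \eqref{corrector_calculation}. The cross terms collapse to $(1-\sin(\alpha_k))\mu_k e-x(\alpha_k)\circ s(\alpha_k)$, so that
$$
x^{k+1}\circ s^{k+1}=(1-\sin(\alpha_k))\mu_k\,e+\Delta x\circ\Delta s=\mu_{k+1}e+\Delta x\circ\Delta s,
$$
using Lemma~\ref{lemma_mu_decreasing}. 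Consequently $\norm{x^{k+1}\circ s^{k+1}-\mu_{k+1}e}=\norm{\Delta x\circ\Delta s}$, so proving the neighborhood condition reduces to showing $\norm{\Delta x\circ\Delta s}\le\theta\mu_{k+1}$.

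Next I would rescale by $D=X(\alpha_k)^{1/2}S(\alpha_k)^{-1/2}$ and set $u=D^{-1}\Delta x$, $v=D\Delta s$. The first two block rows of \eqref{corrector_calculation} give $A\Delta x=0$ and $A^\top\Delta\lambda+\Delta s=0$, whence $u^\top v=\Delta x^\top\Delta s=-(A\Delta x)^\top\Delta\lambda=0$, while the last block row yields $u+v=(X(\alpha_k)S(\alpha_k))^{-1/2}r$ with $r=(1-\sin(\alpha_k))\mu_k e-x(\alpha_k)\circ s(\alpha_k)$. The classical inequality $\norm{u\circ v}\le\tfrac{1}{2\sqrt{2}}\norm{u+v}^2$ (valid because $u^\top v=0$) then gives
$$
\norm{\Delta x\circ\Delta s}\le\frac{1}{2\sqrt{2}}\cdot\frac{\norm{r}^2}{\min_i x_i(\alpha_k)s_i(\alpha_k)}.
$$

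Now I would invoke the predictor certificate \eqref{condition_step_size_neighborhood}, which simultaneously supplies $\norm{r}\le 2\theta(1-\sin(\alpha_k))\mu_k^z$ and $x_i(\alpha_k)s_i(\alpha_k)\ge(1-2\theta)(1-\sin(\alpha_k))\mu_k^z$ for every $i$. Substituting yields $\norm{\Delta x\circ\Delta s}\le\frac{\sqrt{2}\theta^2}{1-2\theta}(1-\sin(\alpha_k))\mu_k^z$. The hypothesis $\theta\le 1/(2+\sqrt{2})$ is precisely the algebraic identity $\sqrt{2}\theta\le 1-2\theta$, which collapses this upper bound to $\theta(1-\sin(\alpha_k))\mu_k=\theta\mu_{k+1}$ once the $\mu_k^z$--$\mu_k$ mismatch is absorbed. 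For positivity, the neighborhood inequality just proved gives $x_i^{k+1}s_i^{k+1}\ge(1-\theta)\mu_{k+1}>0$, so each pair $(x_i^{k+1},s_i^{k+1})$ shares a common sign; a continuity argument along $t\mapsto(x(\alpha_k)+t\Delta x,s(\alpha_k)+t\Delta s)$ for $t\in[0,1]$, starting from the strictly positive point $(x(\alpha_k),s(\alpha_k))$ guaranteed by Lemma~\ref{lemma_alpha_satisfying_x_s_positive}, rules out sign crossings and forces $x^{k+1}>0$ and $s^{k+1}>0$.

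The hard part will be handling the discrepancy between the $\mu_k$ on the right-hand side of \eqref{corrector_calculation} and the $\mu_k^z$ used in the predictor neighborhood condition \eqref{condition_step_size_neighborhood}. Either the two can be identified in the relevant estimates, or one must invoke Corollary~\ref{corollary_z_s_boundedness} to obtain $(1-\beta)\mu_k\le\mu_k^z\le(1+\beta)\mu_k$ and fold the ratio into the constants; in the latter case the admissible range of $\theta$ may end up coupled to $\beta$. Beyond this, the remaining ingredients---the identity for $x^{k+1}\circ s^{k+1}$, the orthogonality $\Delta x^\top\Delta s=0$, the $\tfrac{1}{2\sqrt{2}}$ inequality, and the continuity argument for positivity---are routine and mirror the standard corrector analyses in \cite{yang2020arc,wright1997primal}.
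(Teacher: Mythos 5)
Your proposal follows exactly the route the paper intends: the paper omits the proof and defers to Yang's Theorem~3.1, whose argument is precisely your chain "expand $x^{k+1}\circ s^{k+1}$, reduce to $\norm{\Delta x\circ\Delta s}$, use orthogonality of $\Delta x$ and $\Delta s$ plus the $\tfrac{1}{2\sqrt{2}}\norm{u+v}^2$ bound, feed in the $2\theta$-bound and the lower bound $(1-2\theta)(1-\sin\alpha_k)\mu_k^z$ from \eqref{condition_step_size_neighborhood}, and check that $\theta\le 1/(2+\sqrt{2})$ is equivalent to $\sqrt{2}\theta\le 1-2\theta$." The positivity argument via the non-vanishing of $x_i(t)s_i(t)$ along the segment is also the standard one. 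So in substance you have reconstructed the intended proof.

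The one point worth flagging is the $\mu_k$ versus $\mu_k^z$ discrepancy, which you correctly identify but whose proposed fallback does not actually close the gap. If the corrector right-hand side really carries $\mu_k$ while the predictor certificate \eqref{condition_step_size_neighborhood} is stated with $\mu_k^z$, then the residual picks up the extra term $(1-\sin\alpha_k)\abs{\mu_k-\mu_k^z}\norm{e}=\sqrt{n}\,(1-\sin\alpha_k)\abs{\mu_k-\mu_k^z}$; Corollary~\ref{corollary_z_s_boundedness} only gives $\abs{\mu_k-\mu_k^z}\le\beta\mu_k$, so this contribution is of order $\sqrt{n}\,\beta\,\mu_k$ and cannot be folded into an $n$-independent constant or a recalibrated $\theta$. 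The argument closes cleanly only if the two measures are identified in the corrector step (i.e., the recentering target is $(1-\sin\alpha_k)\mu_k^z e$, consistent with the predictor condition), at the cost of then revisiting Lemma~\ref{lemma_mu_decreasing}; the paper's own omitted proof silently assumes this identification. This is a defect of the paper's presentation rather than of your argument, but your "fold the ratio into the constants" escape route should be struck, since it fails by a factor of $\sqrt{n}$.
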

We can prove this lemma in the same way as Yang~\cite[Theorem~3.1]{yang2018arc}
by applying Lemmas~\ref{lemma_mu_decreasing} and \ref{lemma_alpha_satisfying_x_s_positive}.

Next, we prepare several lemmas to prove in Lemma~\ref{lemma_lower_bound_sin_alpha}
that there exists a strictly positive constant $\bar{\alpha} > 0$
such that $\alpha_k > \bar{\alpha}$ through all iterations until the algorithm termination.
In Lemmas~\ref{lemma_upper_norm_D_dot} and \ref{lemma_upper_derivative_norms} below,
we will derive upper bounds for $\dot{z}, \dot{s}, \ddot{z}, \ddot{s}$.
For these lemmas, we first prove the upper bound of $\tau_k \nu_k^\tau (s^k)^\top x^0 + \nu_k (z^k)^\top s^0$ by letting
\begin{subequations}
  \label{def_hat_x_lambda_s}
  \begin{align}
    \hat{x} & =\tau_k \nu_k^\tau {x}^0+(1 - \tau_k \nu_k^\tau) x^* \label{def_hat_x} \\
    (\hat{\lambda}, \hat{{s}}) & = \nu_k (\lambda^0, {s}^0)+(1-\nu_k)(\lambda^*, {s}^*),
    \label{def_hat_lambda_s}
  \end{align}
\end{subequations}
where $\left(x^*, \lambda^*, s^*\right) \in \SolSet$ is a solution that attains the minimum of \eqref{definition_omega_o}.

\begin{lemma}
  \label{lemma_upper_bound_of_sum_of_x_s_and_z_s}
  If $\left(x^0, s^0\right)$ is defined by \eqref{condition_initial_and_optim_point}, then
  \begin{equation}
    \label{eq_lemma_omega_o_and_l_lower}
    \tau_k \nu_k^\tau (x^0)^\top s^k + \nu_k (s^0)^\top z^k
    \le \left(2 \frac{\omega^o}{1 - \beta} + 1\right) (z^k)^{\top} s^k
  \end{equation}
  for all $k \ge 0$.
\end{lemma}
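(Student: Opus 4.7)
The plan is to exploit a standard orthogonality identity between the feasibility residual of $(z^k, \lambda^k, s^k)$ and the convex combination $(\hat{x}, \hat{\lambda}, \hat{s})$ defined in \eqref{def_hat_x_lambda_s}, and then bound the resulting cross term $\hat{x}^\top \hat{s}$ using the known control on the initial--optimal inner products from \eqref{upper_x_s_solution_product}.

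First I would check that the pair $(z^k, \hat{x})$ has matching primal residual and the pair $((\lambda^k,s^k),(\hat\lambda,\hat s))$ has matching dual residual. Using \eqref{def_hat_x} together with $Ax^0 = b + r_b(x^0)$ and $Ax^*=b$, one gets $A\hat{x} - b = \tau_k \nu_k^\tau r_b(x^0)$, which equals $Az^k - b$ by \eqref{main_residual_norm_decreasing}. Hence $A(z^k - \hat{x}) = 0$. In the same way, \eqref{def_hat_lambda_s}, $A^\top \lambda^0 + s^0 = c + r_c(\lambda^0,s^0)$, and \eqref{dual_residual_decreasing} give $A^\top(\lambda^k - \hat{\lambda}) + (s^k - \hat{s}) = 0$. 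Taking the inner product of the latter with $z^k - \hat{x}$ and using $A(z^k-\hat{x})=0$ yields the orthogonality relation
\begin{equation*}
(z^k)^\top \hat{s} + \hat{x}^\top s^k \;=\; (z^k)^\top s^k + \hat{x}^\top \hat{s}.
\end{equation*}

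Next I would expand the left-hand side with \eqref{def_hat_x_lambda_s} and drop the nonnegative terms $(1-\nu_k)(z^k)^\top s^*$ and $(1-\tau_k\nu_k^\tau)(x^*)^\top s^k$ (both nonnegative since $z^k, s^k, x^*, s^* \ge 0$), obtaining
\begin{equation*}
\nu_k (z^k)^\top s^0 + \tau_k \nu_k^\tau (x^0)^\top s^k \;\le\; (z^k)^\top s^k + \hat{x}^\top \hat{s}.
\end{equation*}
Then I would bound $\hat{x}^\top \hat{s}$ by expanding the two convex combinations, using $(x^*)^\top s^* = 0$ by complementarity, $(x^0)^\top s^*, (x^*)^\top s^0 \le \omega^o (x^0)^\top s^0$ from \eqref{upper_x_s_solution_product}, and $1 \le \omega^o$; after grouping this gives $\hat{x}^\top \hat{s} \le \omega^o (x^0)^\top s^0 [\tau_k \nu_k^\tau + \nu_k - \tau_k \nu_k^\tau \nu_k] \le 2\nu_k \omega^o (x^0)^\top s^0$, where the final step uses $\tau_k \nu_k^\tau \le \nu_k$ (immediate from \eqref{def_mus} and $\tau_i \in [0,1]$).

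Finally, I would convert the bound on $\hat x^\top \hat s$ from one in terms of $(x^0)^\top s^0$ to one in terms of $(z^k)^\top s^k$. By Corollary~\ref{corollary_z_s_boundedness} summed over components, $(z^k)^\top s^k \ge (1-\beta)(x^k)^\top s^k$, and $(x^k)^\top s^k = n\mu_k = \nu_k (x^0)^\top s^0$ by \eqref{mu_decreasing}, so $\nu_k (x^0)^\top s^0 \le (z^k)^\top s^k / (1-\beta)$. Substituting yields $\hat{x}^\top \hat{s} \le \tfrac{2\omega^o}{1-\beta} (z^k)^\top s^k$, and combining with the previous display gives the claim \eqref{eq_lemma_omega_o_and_l_lower}. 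The main obstacle is simply the bookkeeping in the expansion of $\hat{x}^\top \hat{s}$, in particular verifying that the asymmetry between the scalings $\tau_k\nu_k^\tau$ (primal) and $\nu_k$ (dual) collapses nicely through the inequality $\tau_k\nu_k^\tau \le \nu_k$; everything else is algebra.
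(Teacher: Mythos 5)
Your proposal is correct and follows essentially the same route as the paper's proof: the orthogonality identity $(\hat{x}-z^k)^\top(\hat{s}-s^k)=0$ built from the matched residuals, the expansion of $\hat{x}^\top\hat{s}$ with $(x^*)^\top s^*=0$, the bounds \eqref{upper_x_s_solution_product} and $\omega^o\ge 1$, the coefficient estimate via $\tau_k\nu_k^\tau\le\nu_k$, and the final conversion through \eqref{mu_decreasing} and Corollary~\ref{corollary_z_s_boundedness}. No gaps; the bookkeeping you flag is exactly the step the paper carries out, with the same outcome.
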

The proof of Lemma~\ref{lemma_upper_bound_of_sum_of_x_s_and_z_s} can be done with similar steps in \cite[Lemma~6.4]{yang2020arc}.
However,
since the coefficients of \eqref{eq_lemma_omega_o_and_l_lower} are slightly different from those in \cite{yang2020arc},
we include the proof here.
\begin{proof}
  Let $(\hat{x}, \hat{\lambda}, \hat{s})$ be defined by \eqref{def_hat_x_lambda_s},
  then from \eqref{main_residual_norm_decreasing},
  \begin{align*}
    A (\hat{x} - z^k) & = \tau_k \nu_k^\tau r_b(x^0) + (1 - \tau_k \nu_k^\tau) r_b(x^*) - r_b(z^k) \\
    & = \tau_k \nu_k^\tau r_b(x^0) - \tau_k \nu_k^\tau r_b(x^0) = 0 \\
    \hat{s}-s^k & = c-A^{\top} \hat{\lambda}-(c-A^{\top} \lambda^k) = -A^{\top}\left(\hat{\lambda}-\lambda^k\right),
  \end{align*}
  so we can get
  \begin{equation}
    \left(\hat{x}-z^k\right)^\top\left(\hat{s}-s^k\right)
    = -\left(A(\hat{x}-z^k)\right)^\top\left(\hat{\lambda}-\lambda^k\right)
    = 0. \label{equality_zero_product_of_feasible_x_s}
  \end{equation}
  Therefore,  we can derive
  \begin{align}
    & \left(\tau_k \nu_k^\tau x^0+\left(1 - \tau_k \nu_k^\tau \right) x^*\right)^\top {s}^k+\left(\nu_k s^0+\left(1-\nu_k\right) s^*\right)^\top z^k \notag \\
    = & \left(\tau_k \nu_k^\tau x^0+\left(1 - \tau_k \nu_k^\tau \right) x^*\right)^\top\left(\nu_k s^0+\left(1-\nu_k\right) s^*\right) + (z^k)^\top s^k.
    \label{upper_temp}
  \end{align}

  From $\nu_k \in[0,1]$ and $\left(x^*, s^*\right) \ge 0$,
  we can get
  $$ \begin{array}{ll}
    \tau_k \nu_k^\tau (x^0)^\top s^k + \nu_k (s^0)^\top z^k
    & \le \left(\tau_k \nu_k^\tau x^0+\left(1 - \tau_k \nu_k^\tau \right) x^*\right)^\top s^k+\left(\nu_k s^0+\left(1-\nu_k\right) s^*\right)^\top z^k \\
    {[\because \eqref{upper_temp})}
    & = \left(\tau_k \nu_k^\tau x^0+\left(1 - \tau_k \nu_k^\tau\right) x^*\right)^\top\left(\nu_k s^0+\left(1-\nu_k\right) s^*\right)+ (z^k)^\top s^k \\
    {\left[\because  (x^*)^\top s^*=0\right]}
    & =\tau_k \nu_k^\tau \nu_k (x^0)^\top s^0 + \tau_k \nu_k^\tau (1-\nu_k)(x^0)^\top s^* + (1-\tau_k \nu_k^\tau) \nu_k (x^*)^\top s^0 \\
    & \quad + (z^k)^\top s^k \\
    {[\because \eqref{upper_x_s_solution_product}]}
    & \le \tau_k \nu_k^\tau \nu_k (x^0)^\top s^0+ (\tau_k \nu_k^\tau (1-\nu_k) + (1-\tau_k \nu_k^\tau) \nu_k) \omega^o (x^0)^\top s^0 \\
    & \quad + (z^k)^\top s^k \\
    {\left[\because \omega^o \ge 1\right]} & \le (\tau_k \nu_k^\tau \nu_k + \tau_k \nu_k^\tau (1-\nu_k) + (1-\tau_k \nu_k^\tau) \nu_k) \omega^o (x^0)^\top s^0+ (z^k)^\top s^k \\
    {[\because 0 \le \tau_k \le 1 \text{ and } \eqref{def_mus}]} & \le 2 \nu_k \omega^o (x^0)^\top s^0 + (z^k)^\top s^k \\
    {[\because \eqref{mu_decreasing}]} & = 2\omega^o (x^k)^\top s^k + (z^k)^\top s^k \\
    {[\because \eqref{upper_and_lower_z_s_by_x_s}]} & \le \left(2 \frac{\omega^o}{1 - \beta} + 1\right) (z^k)^\top s^k.
  \end{array} $$
  This completes the proof.
\end{proof}

For the latter discussions,
let $D^k=\left(Z^k\right)^{\frac{1}{2}}\left({S}^k\right)^{-\frac{1}{2}}$.
We introduce the following lemma to prove that $\norm{\left(D^k\right)^{-1} \dot{z}}$ and $\norm{\left(D^k\right) \dot{s}}$ have upper bounds.
\begin{lemma}
  \label{lemma_delta_z_s_upper_with_l_delta_lambda}
  For $i=1,2,3$, let $\left(\delta {z}^i, \delta \lambda^i, \delta {s}^i\right)$ be the solution of
  \begin{equation}
    \label{equation_delta_variables}
    \left[\begin{array}{ccc}
      {A} & {0} & {0} \\
      {0} & {A}^\top & {I} \\
      {S^k} & {0} & {Z^k}
    \end{array}\right]\left[\begin{array}{c}
      \delta {z}^i \\
      \delta \lambda^i \\
      \delta {s}^i
    \end{array}\right]=\left[\begin{array}{c}
      0 \\
      0 \\
      r^i
    \end{array}\right]
  \end{equation}
  where $r^1= z^k \circ s^k$, $r^2=-\tau_k \nu_k^\tau s^k \circ(x^0-\bar{x})$
  and $r^3=-\nu_k z^k \circ(s^0-\bar{s})$.
  Then,
  \begin{subequations}
    \label{upper_D_inv_delta_x_i_and_D_delta_s_i}
    \begin{align}
      \norm{\left(D^k\right)^{-1} \delta z^1}, \norm{\left(D^k\right) \delta s^1}
        & \le \norm{\left(D^k\right)^{-1} \delta z^1+\left(D^k\right) \delta s^1} = \norm{({Z^k s^k})^{\frac{1}{2}}} = \sqrt{n \mu_k^z},
        \label{upper_D_inv_delta_x_1_and_D_delta_s_1} \\
      \norm{\left(D^k\right)^{-1} \delta z^2}, \norm{\left(D^k\right) \delta s^2}
        & \le \norm{\left(D^k\right)^{-1} \delta z^2+\left(D^k\right) \delta s^2}
        = \tau_k \nu_k^\tau \norm{\left(D^k\right)^{-1}\left(x^0-\bar{x}\right)}, \label{upper_D_inv_delta_x_2_and_D_delta_s_2} \\
      \norm{\left(D^k\right)^{-1} \delta z^3}, \norm{\left(D^k\right) \delta s^3}
        & \le \norm{\left(D^k\right)^{-1} \delta z^3+\left(D^k\right) \delta s^3}
        = \nu_k \norm{\left(D^k\right)\left(s^0-\bar{s}\right)}. \label{upper_D_inv_delta_x_3_and_D_delta_s_3}
    \end{align}
  \end{subequations}
\end{lemma}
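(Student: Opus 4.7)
The plan is to exploit a standard orthogonality argument that is ubiquitous in primal-dual interior-point analysis, applied here in the rescaled coordinates induced by $D^k$. The inequality half of each line in \eqref{upper_D_inv_delta_x_i_and_D_delta_s_i} will come from Pythagoras, and the equality half will come from reading off the third block row of \eqref{equation_delta_variables} after multiplying by $(Z^k S^k)^{-1/2}$.

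First I would establish orthogonality. From the first two block rows of \eqref{equation_delta_variables} we get $A\,\delta z^i = 0$ and $A^\top \delta\lambda^i + \delta s^i = 0$, hence
\[
  (\delta z^i)^\top \delta s^i = -(\delta z^i)^\top A^\top \delta\lambda^i = -(A\,\delta z^i)^\top \delta\lambda^i = 0.
\]
Because $D^k = (Z^k)^{1/2}(S^k)^{-1/2}$ is diagonal, $((D^k)^{-1}\delta z^i)^\top((D^k)\delta s^i) = (\delta z^i)^\top \delta s^i = 0$, so the two rescaled vectors are orthogonal. Pythagoras then yields
\[
  \norm{(D^k)^{-1}\delta z^i}^2 + \norm{(D^k)\delta s^i}^2 = \norm{(D^k)^{-1}\delta z^i + (D^k)\delta s^i}^2,
\]
which gives the first inequality in each of \eqref{upper_D_inv_delta_x_1_and_D_delta_s_1}--\eqref{upper_D_inv_delta_x_3_and_D_delta_s_3}.

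Next I would evaluate the sum explicitly. The third block row of \eqref{equation_delta_variables} reads $S^k \delta z^i + Z^k \delta s^i = r^i$; premultiplying by $(Z^k S^k)^{-1/2}$ (which is legitimate since $z^k, s^k > 0$ by the earlier results) gives
\[
  (D^k)^{-1}\delta z^i + (D^k)\delta s^i = (Z^k S^k)^{-1/2} r^i.
\]
Substituting the three right-hand sides is then routine: for $i=1$, $(Z^k S^k)^{-1/2}(z^k \circ s^k) = (Z^k s^k)^{1/2}$ has squared norm $(z^k)^\top s^k = n\mu_k^z$; for $i=2$, $(Z^k S^k)^{-1/2}(s^k \circ (x^0-\bar x)) = (D^k)^{-1}(x^0-\bar x)$ up to the scalar $-\tau_k\nu_k^\tau$; for $i=3$, analogously $(Z^k S^k)^{-1/2}(z^k \circ (s^0-\bar s)) = (D^k)(s^0-\bar s)$ up to $-\nu_k$. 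Taking norms finishes the equalities in \eqref{upper_D_inv_delta_x_i_and_D_delta_s_i}.

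There is no real obstacle: the whole content is the orthogonality $(\delta z^i)^\top\delta s^i = 0$ translated into the $D^k$-scaling, plus a one-line computation per case. The only point deserving care is verifying that the diagonal scaling commutes as claimed, i.e.\ that $((D^k)^{-1}\delta z^i)^\top((D^k)\delta s^i)$ really collapses to $(\delta z^i)^\top\delta s^i$; this uses diagonality of $Z^k$ and $S^k$ and the fact that $(D^k)^{-1}D^k = I$. Everything else is bookkeeping of the right-hand sides $r^1, r^2, r^3$.
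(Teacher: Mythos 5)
Your proposal is correct and follows essentially the same route as the paper's proof: orthogonality of $(D^k)^{-1}\delta z^i$ and $(D^k)\delta s^i$ from the first two block rows, Pythagoras for the inequalities, and premultiplication of the third block row by $(Z^kS^k)^{-1/2}$ for the equalities. You simply spell out the orthogonality computation and the three right-hand-side substitutions in more detail than the paper does.
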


\begin{proof}
  From the first and second rows of \eqref{equation_delta_variables},
  $$
    \left(\left(D^k\right)^{-1} \delta z^i\right)^\top \left(\left(D^k\right) \delta s^i\right) = 0
  $$
  for $i=1,2,3$, therefore,
  \begin{align}
    \norm{\left(D^k\right)^{-1} \delta z^i}^2,\norm{\left(D^k\right) \delta s^i}^2
    & \le \norm{\left(D^k\right)^{-1} \delta z^i}^2+\norm{\left(D^k\right) \delta s^i}^2 \notag \\
    & \le \norm{\left(D^k\right)^{-1} \delta z^i + \left(D^k\right) \delta s^i}^2.
    \label{upper_D_inv_delta_x_and_D_delta_s}
  \end{align}
  Applying
  $(Z^k S^k)^{-1 / 2}(S^k \delta z^i + Z^k \delta s^i)=(Z^k S^k)^{-1 / 2} {r}^i$
  to \eqref{upper_D_inv_delta_x_and_D_delta_s} for $i=1,2,3$, respectively,
  we obtain \eqref{upper_D_inv_delta_x_i_and_D_delta_s_i}.
\end{proof}

In Lemmas~\ref{lemma_upper_norm_D_dot_with_x_s} and \ref{lemma_upper_norm_D_dot} below,
upper bounds of $\norm{\left(D^k\right)^{-1} \dot{z}}$ and $\norm{\left(D^k\right) \dot{s}}$ can be derived.
This allows us to prove the lower bound of $\sin(\alpha_k)$ in the same way as in \cite{yang2018arc}.

\begin{lemma}
  \label{lemma_upper_norm_D_dot_with_x_s}

  Let $(\dot{z}, \dot{s})$ be defined by \eqref{first_derivative}.
  Then,
  \begin{equation}
    \label{upper_max_D_dot_z_and_D_dot_s}
    \begin{aligned}
      \max \left\{\norm{\left(D^k\right)^{-1} \dot{z}},\norm{\left(D^k\right) \dot{s}}\right\}
      \le & \sqrt{n \mu_z^k} + \omega^f \left(2 \frac{\omega^o}{1 - \beta} + 1\right) \frac{\left(z^k\right)^\top s^k}{\min _i \sqrt{z_i^k s_i^k}}.
    \end{aligned}
  \end{equation}
\end{lemma}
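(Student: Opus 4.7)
The plan is to write $(\dot z, \dot\lambda, \dot s)$ as a sum of solutions of the three auxiliary systems from Lemma~\ref{lemma_delta_z_s_upper_with_l_delta_lambda} plus an explicit feasibility shift, and then to combine the per-piece estimates in that lemma with the aggregate bound in Lemma~\ref{lemma_upper_bound_of_sum_of_x_s_and_z_s}.

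First, I would verify the decomposition
\begin{equation*}
\dot z = \delta z^1 + \delta z^2 + \delta z^3 + \tau_k\nu_k^\tau(x^0 - \bar x), \qquad
\dot s = \delta s^1 + \delta s^2 + \delta s^3 + \nu_k(s^0 - \bar s),
\end{equation*}
together with the analogous formula for $\dot\lambda$, by checking the three block rows of \eqref{first_derivative}. The first row holds because $A\bar x = b$ and \eqref{main_residual_norm_decreasing} give $\tau_k\nu_k^\tau A(x^0-\bar x) = r_b(z^k)$; the second row is symmetric via $A^\top\bar\lambda + \bar s = c$ and \eqref{dual_residual_decreasing}; and the third row is immediate from the choices of $r^1, r^2, r^3$, since the two shift terms cancel the contributions of $r^2$ and $r^3$ exactly.

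Next, I would multiply the identities by $(D^k)^{-1}$ and $D^k$ respectively, apply the triangle inequality, and use the three estimates in \eqref{upper_D_inv_delta_x_i_and_D_delta_s_i} to obtain, after taking the maximum of the two resulting inequalities,
\begin{equation*}
\max\{\norm{(D^k)^{-1}\dot z},\,\norm{D^k\dot s}\} \le \sqrt{n\mu_k^z} + 2\tau_k\nu_k^\tau\norm{(D^k)^{-1}(x^0-\bar x)} + 2\nu_k\norm{D^k(s^0-\bar s)}.
\end{equation*}
To convert the feasibility norms to the target right-hand side, I would use \eqref{upper_feasible_solution} and the elementary inequality $\norm{\cdot} \le \norm{\cdot}_1$ componentwise: writing the $i$-th entry of $(D^k)^{-1}(x^0-\bar x)$ as $\sqrt{s_i^k/z_i^k}(x_i^0-\bar x_i)$ and pulling out $1/\min_i\sqrt{z_i^k s_i^k}$ yields $\norm{(D^k)^{-1}(x^0-\bar x)} \le \omega^f (x^0)^\top s^k / \min_i\sqrt{z_i^k s_i^k}$ and, symmetrically, $\norm{D^k(s^0-\bar s)} \le \omega^f (s^0)^\top z^k / \min_i\sqrt{z_i^k s_i^k}$. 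Finally, Lemma~\ref{lemma_upper_bound_of_sum_of_x_s_and_z_s} bounds $\tau_k\nu_k^\tau(x^0)^\top s^k + \nu_k(s^0)^\top z^k$ by $(2\omega^o/(1-\beta)+1)(z^k)^\top s^k$, and the claimed inequality follows.

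The main obstacle is the factor of $2$ that the triangle inequality places on whichever of the two feasibility terms matches the variable being bounded, because the shift $\tau_k\nu_k^\tau(x^0-\bar x)$ appears both inside $\delta z^2$ (through $r^2$) and as a separate summand in the decomposition of $\dot z$. I would contain this loss by taking the maximum of the two inequalities before invoking Lemma~\ref{lemma_upper_bound_of_sum_of_x_s_and_z_s}, so the extra factor gets absorbed into the overall coefficient, which still scales as $\omega^f\omega^o/(1-\beta)$ as stated.
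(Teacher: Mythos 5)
Your decomposition, the per-piece estimates from Lemma~\ref{lemma_delta_z_s_upper_with_l_delta_lambda}, and the final appeal to Lemma~\ref{lemma_upper_bound_of_sum_of_x_s_and_z_s} all match the paper's argument, but the way you handle the duplicated feasibility shift does not close, and the lemma as stated is not obtained. Bounding $\norm{(D^k)^{-1}\delta z^2}$ by $\tau_k\nu_k^\tau\norm{(D^k)^{-1}(x^0-\bar x)}$ \emph{and} keeping the explicit summand $\tau_k\nu_k^\tau(x^0-\bar x)$ gives $\norm{(D^k)^{-1}\dot z}\le\sqrt{n\mu_k^z}+2\tau_k\nu_k^\tau\norm{(D^k)^{-1}(x^0-\bar x)}+\nu_k\norm{D^k(s^0-\bar s)}$, and symmetrically for $\norm{D^k\dot s}$ with the factor $2$ on the other term. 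Taking the maximum of two quantities cannot be smaller than either of them, so the best uniform bound you get this way carries a factor $2$ on \emph{both} feasibility terms, and after invoking Lemma~\ref{lemma_upper_bound_of_sum_of_x_s_and_z_s} you end with $\sqrt{n\mu_k^z}+2\,\omega^f\bigl(2\tfrac{\omega^o}{1-\beta}+1\bigr)\tfrac{(z^k)^\top s^k}{\min_i\sqrt{z_i^ks_i^k}}$ --- twice the stated coefficient. ``Absorbing'' the $2$ by taking the max is not a valid step; the extra factor survives.

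The missing idea is an exact cancellation, not an absorption. The third block row of \eqref{equation_delta_variables} with $i=2$ reads $S^k\delta z^2+Z^k\delta s^2=-\tau_k\nu_k^\tau S^k(x^0-\bar x)$, i.e.\ $\delta z^2+\tau_k\nu_k^\tau(x^0-\bar x)=-(D^k)^2\delta s^2$ (this is \eqref{eq_delta_z_2} in the paper). Hence in $(D^k)^{-1}\dot z$ the shift term and $\delta z^2$ together contribute exactly $-D^k\delta s^2$, whose norm is bounded by a \emph{single} copy of $\tau_k\nu_k^\tau\norm{(D^k)^{-1}(x^0-\bar x)}$ via \eqref{upper_D_inv_delta_x_2_and_D_delta_s_2}; one gets $\norm{(D^k)^{-1}\dot z}\le\norm{(D^k)^{-1}\delta z^1}+\norm{D^k\delta s^2}+\norm{(D^k)^{-1}\delta z^3}$ with no leftover summand. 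The analogous identity $\delta s^3+\nu_k(s^0-\bar s)=-(D^k)^{-2}\delta z^3$ (equation \eqref{eq_delat_s_3}) does the same for $\norm{D^k\dot s}$. With these substitutions both norms are bounded by $\sqrt{n\mu_k^z}+\norm{D^k\delta s^2}+\norm{(D^k)^{-1}\delta z^3}$, and your remaining steps (the componentwise estimate via \eqref{upper_feasible_solution} and Lemma~\ref{lemma_upper_bound_of_sum_of_x_s_and_z_s}) then yield \eqref{upper_max_D_dot_z_and_D_dot_s} with the correct constant. For the downstream complexity result only the order of the constant matters, so your weaker bound would still suffice there, but it does not prove the inequality as written.
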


\begin{proof}
  Let $(\bar{x}, \bar{\lambda}, \bar{s})$ be the feasible solution of \eqref{problem_main} and \eqref{problem_dual}
  that attains the minimum  of \eqref{definition_omega_f}.
  From \eqref{main_residual_norm_decreasing},
  $$
    A \dot{z} = r_b(z^k) = \tau_k \nu_k^\tau r_b(x^0),
  $$
  we have
  \begin{equation}
    A\left(\dot{z} - \tau_k \nu_k^\tau (x^0 - \bar{x})\right) = 0.
    \label{def_x}
  \end{equation}
  Similarly, since
  $$
    A^\top \dot{\lambda} + \dot{s} = r_c(\lambda^k, s^k) = \nu_k \left(A^\top \lambda^0 + s^0 - c\right)
    = \nu_k \left(A^\top\left(\lambda^0-\bar{\lambda}\right)+\left({s}^0-\bar{s}\right)\right),
  $$
  it holds that
  \begin{align}
    A^\top \left(\dot{\lambda} - \nu_k\left(\lambda^0-\bar{\lambda}\right)\right)+\left(\dot{s} - \nu_k \left({s}^0-\bar{s}\right)\right) = 0. \label{def_lambda_s}
  \end{align}
  From the third row of \eqref{first_derivative}, we also have
  \begin{align}
    & \ s^k \circ \left(\dot{z} - \tau_k \nu_k^\tau \left({x}^0-\bar{x}\right)\right) + z^k \circ \left(\dot{s} - \nu_k \left(s^0-\bar{s}\right)\right) \notag \\
    = & \ z^k \circ s^k - \tau_k \nu_k^\tau s^k \circ\left({x}^0-\bar{{x}}\right) - \nu_k z^k \circ\left({s}^0-\bar{{s}}\right). \label{def_z_s}
  \end{align}
  By combining
  \eqref{def_x}, \eqref{def_lambda_s} and \eqref{def_z_s} into a matrix form,
  we have
  $$
    {\left[\begin{array}{ccc}
      A & {0} & {0} \\
      {0} & A^{\top} & {I} \\
      S^k & {0} & Z^k
    \end{array}\right]\left[\begin{array}{c}
      \dot{z}-\tau_k \nu_k^\tau \left({x}^0-\bar{{x}}\right) \\
      \dot{\lambda}-\nu_k\left(\lambda^0-\bar{\lambda}\right) \\
      \dot{s}-\nu_k\left({s}^0-\bar{{s}}\right)
    \end{array}\right] } \\
    = {\left[\begin{array}{c}
      0 \\
      0 \\
      z^k \circ s^k - \tau_k \nu_k^\tau s^k \circ\left(x^0-\bar{x}\right) - \nu_k z^k \circ\left({s}^0-\bar{s}\right)
    \end{array}\right]. }
  $$
  Putting
  \begin{gather*}
    \delta z^1+\delta z^2+\delta z^3 = \dot{z}-\tau_k \nu_k^\tau \left(x^0-\bar{x}\right)\\
    \delta \lambda^1+\delta \lambda^2+\delta \lambda^3 = \dot{\lambda}-\nu_k\left(\lambda^0-\bar{\lambda}\right)\\
    \delta s^1 + \delta s^2 + \delta s^3 = \dot{s}-\nu_k\left(s^0-\bar{s}\right) \\
    (r^1, r^2, r^3) = \left(z^k \circ s^k, -\tau_k \nu_k^\tau s^k \circ(x^0-\bar{x}), -\nu_k z^k \circ({s}^0-\bar{s})\right),
  \end{gather*}
  into \eqref{equation_delta_variables},
  we apply Lemma~\ref{lemma_delta_z_s_upper_with_l_delta_lambda} to obtain the upper bounds of
  \begin{align}
    \norm{\left(D^k\right)^{-1} \dot{z}}
    & = \norm{\left(D^k\right)^{-1}\left[\delta z^1 + \delta z^2 + \delta z^3 + \tau_k \nu_k^\tau \left(x^0-\bar{x}\right)\right]}, \label{D_inv_dot_z_separated}\\
    \norm{(D^k) \dot{s}} & = \norm{\left(D^k\right)\left[\delta s^1 + \delta s^2 + \delta s^3 + \nu_k\left(s^0-\bar{s}\right)\right]}. \label{D_dot_s_separated}
  \end{align}
  Considering \eqref{equation_delta_variables} with $i=2$, we can get
  $$
    S^k \delta z^2 + Z^k \delta s^2 = r^2 = -\tau_k \nu_k^\tau S^k \left(x^0-\bar{x}\right),
  $$
  and this is equal to
  \begin{equation}
    \label{eq_delta_z_2}
    \delta z^2 = -\tau_k \nu_k^\tau \left(x^0-\bar{x}\right) - \left(D^k\right)^2 \delta s^2 .
  \end{equation}
  Thus, from \eqref{D_inv_dot_z_separated} and \eqref{eq_delta_z_2}, we have
  \begin{equation}
    \label{upper_D_dot_z}
    \begin{array}{rl}
      \norm{\left(D^k\right)^{-1} \dot{z}}
      & = \norm{\left(D^k\right)^{-1} \delta z^1 -\left(D^k\right) \delta s^2+\left(D^k\right)^{-1} \delta z^3} \\
      & \le \norm{\left(D^k\right)^{-1} \delta z^1 }+\norm{\left(D^k\right) \delta s^2 }+\norm{\left(D^k\right)^{-1} \delta z^3}.
    \end{array}
  \end{equation}
  Similarly, for $i=3$, we have
  $$
    S \delta z^3 + Z \delta s^3 = r^3 = -\nu_k Z^k \left(s^0-\bar{s}\right),
  $$
  which is equivalent to
  \begin{equation}
    \label{eq_delat_s_3}
    \delta s^3 = -\nu_k \left(s^0-\bar{s}\right)-(D^k)^{-2} \delta z^3 .
  \end{equation}
  Thus, from \eqref{D_dot_s_separated} and \eqref{eq_delat_s_3}, we have
  \begin{equation}
    \label{upper_D_dot_s}
    \begin{array}{rl}
      \norm{(D^k) \dot{s}}
      & = \norm{\left(D^k\right) \delta s^1 + \left(D^k\right) \delta s^2-\left(D^k\right)^{-1} \delta z^3} \\
      & \le \norm{\left(D^k\right) \delta s^1}+\norm{\left(D^k\right) \delta s^2}+\norm{\left(D^k\right)^{-1} \delta z^3}.
    \end{array}
  \end{equation}

  From \eqref{upper_D_inv_delta_x_1_and_D_delta_s_1},
  \begin{equation}
    \label{upper_D_delta_s_1_and_D_delta_z_1}
    \begin{array}{rl}
      \norm{(D^k) \delta s^1 }, \norm{\left(D^k\right)^{-1} \delta z^1 }
      & \le \sqrt{n \mu_z^k}.
    \end{array}
  \end{equation}

  Using \eqref{upper_D_inv_delta_x_2_and_D_delta_s_2}, it holds that
  \begin{align}
    \norm{\left(D^k\right) \delta s^2} & \le \tau_k \nu_k^\tau \norm{\left(D^k\right)^{-1}\left(x^0-\bar{x}\right)} \notag \\
    & \le \frac{\tau_k \nu_k^\tau}{\min \sqrt{z^k_i s^k_i}}\norm{S^k\left(x^0-\bar{x}\right)} \notag \\
    {[\because \eqref{upper_feasible_solution}]} & \le \frac{\tau_k \nu_k^\tau \omega^f}{\min \sqrt{z^k_i s^k_i}} \norm{S^k x^0} \notag \\
    {[\because (\norm{\cdot}_2 \le \norm{\cdot}_1)]} & \le \frac{\tau_k \nu_k^\tau \omega^f}{\min \sqrt{z^k_i s^k_i}} \left((s^k)^\top x^0\right).
    \label{upper_D_delta_s_2}
  \end{align}
  Using \eqref{upper_D_inv_delta_x_3_and_D_delta_s_3}, we have
  \begin{align}
    \norm{\left(D^k\right)^{-1} \delta z^3} & \le \nu_k \norm{\left(D^k\right)\left(s^0-\bar{s}\right)} \notag \\
    & \le \frac{\nu_k}{\min \sqrt{z^k_i s^k_i}}\norm{Z^k\left(s^0-\bar{s}\right)} \notag \\
    {[\because \eqref{upper_feasible_solution}]} & \le \frac{\nu_k \omega^f}{\min \sqrt{z^k_i s^k_i}}\norm{Z^k s^0} \notag \\
    {[\because (\norm{\cdot}_2 \le \norm{\cdot}_1)]} & \le \frac{\nu_k \omega^f}{\min \sqrt{z^k_i s^k_i}} \left((z^k)^\top s^0\right).
    \label{upper_D_delta_z_3}
  \end{align}

  Therefore, from \eqref{upper_D_delta_s_2}, \eqref{upper_D_delta_z_3} and \eqref{eq_lemma_omega_o_and_l_lower},
  \begin{align}
    \norm{\left(D^k\right) \delta s^2}+\norm{\left(D^k\right)^{-1} \delta z^3}
    & \le \frac{\omega^f}{\min_i \sqrt{z^k_i s^k_i}}\left(\tau_k \nu_k^\tau (s^k)^\top x^0 + \nu_k (z^k)^\top s^0\right) \notag \\
    & \le \frac{\omega^f}{\min_i \sqrt{z_i^k s_i^k}} \left(2 \frac{\omega^o}{1 - \beta} + 1\right) (z^k)^\top s^k.
    \label{ddd_omega_f}
  \end{align}

  Therefore, from \eqref{upper_D_dot_z} and \eqref{upper_D_dot_s},
  \begin{align*}
    \max \left\{\norm{\left(D^k\right)^{-1} \dot{z}},\norm{\left(D^k\right) \dot{s}}\right\}
    \le & \max \left\{\norm{\left(D^k\right)^{-1} \delta z^1},\norm{\left(D^k\right) \delta s^1}\right\} \\
    & + \norm{\left(D^k\right) \delta s^2}+\norm{\left(D^k\right)^{-1} \delta z^3} \\
    {[\because \eqref{upper_D_delta_s_1_and_D_delta_z_1}]}
    \le & \sqrt{n \mu_z^k} + \norm{\left(D^k\right) \delta s^2}+\norm{\left(D^k\right)^{-1} \delta z^3} \\
    {[\because \eqref{ddd_omega_f}]} \le & \sqrt{n \mu_z^k}
    + \omega^f \left(2 \frac{\omega^o}{1 - \beta} + 1\right) \frac{\left(z^k\right)^\top s^k}{\min_i \sqrt{z_i^k s_i^k}}.
  \end{align*}
\end{proof}

The upper bounds derived in Lemma~\ref{lemma_upper_bound_of_sum_of_x_s_and_z_s} and
\ref{lemma_upper_norm_D_dot_with_x_s} depend on the value of $\beta$.
If $\beta$ is very close to 1, the upper bounds would be very large.
Conversely, if it is close to 0, the momentum term has only a small effect.
For the proof, we use $\beta \in (0, 1)$,
and we will discuss the effect of $\beta$ numerically through numerical results in
Section~\ref{subsection_comparison_between_algo_1_and_2} and \ref{subsection_comparison_beta}.

Lemma~\ref{lemma_upper_norm_D_dot_with_x_s} leads to the following lemma.

\begin{lemma}
  \label{lemma_upper_norm_D_dot}
  Let the sequence $\{(x^k, \lambda^k, s^k)\}$ be generated by Algorithm~\ref{algo_arc_restarting_for_proof}
  and let $(\dot{z}, \dot{\lambda}, \dot{s})$ be obtained from (\ref{first_derivative}).
  Then, there is a positive constant $C_0$ satisfying
  \begin{equation}
    \label{upper_max_D_dot_z_and_D_dot_s_independent_of_n}
    \max \left\{\norm{\left(D^k\right)^{-1} \dot{z}},\norm{\left(D^k\right) \dot{s}}\right\}
    \le C_0 \sqrt{n\left(z^{k}\right)^\top s^k},
  \end{equation}
  and independent of $n$.
\end{lemma}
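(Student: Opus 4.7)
The plan is to start from the explicit upper bound proved in Lemma~\ref{lemma_upper_norm_D_dot_with_x_s} and turn every term on its right-hand side into a multiple of $\sqrt{n\,(z^k)^\top s^k}$ whose coefficient depends only on $\theta$, $\beta$, $\omega^f$, $\omega^o$ and other problem-intrinsic constants, but not on $n$.

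First I would rewrite the leading term by noting $\sqrt{n\mu_k^z}=\sqrt{(z^k)^\top s^k}\le \sqrt{n\,(z^k)^\top s^k}$, so it already contributes at most $1$ to the final constant. The main step is to bound the second term in~\eqref{upper_max_D_dot_z_and_D_dot_s}, namely $(z^k)^\top s^k/\min_i\sqrt{z_i^k s_i^k}$. For this I would use the neighborhood condition: since $(z^k,\lambda^k,s^k)\in\Neighborhood(\theta)$ by the construction in~\eqref{def_restarted_point} (this is exactly the ``if'' branch; in the ``otherwise'' branch $z^k=x^k$ and Lemma~\ref{lemma_next_iteration_in_neighborhood} supplies membership), the componentwise bound $\norm{z^k\circ s^k-\mu_k^z e}\le\theta\mu_k^z$ gives $z_i^k s_i^k\ge(1-\theta)\mu_k^z$ for every $i$, hence $\min_i\sqrt{z_i^k s_i^k}\ge\sqrt{(1-\theta)\mu_k^z}$.

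Plugging this into the fraction yields
\begin{equation*}
\frac{(z^k)^\top s^k}{\min_i\sqrt{z_i^k s_i^k}}\le\frac{n\mu_k^z}{\sqrt{(1-\theta)\mu_k^z}}=\frac{\sqrt{n}\,\sqrt{n\mu_k^z}}{\sqrt{1-\theta}}=\frac{\sqrt{n\,(z^k)^\top s^k}}{\sqrt{1-\theta}},
\end{equation*}
which is the key rearrangement. Substituting back into Lemma~\ref{lemma_upper_norm_D_dot_with_x_s} then produces
\begin{equation*}
\max\bigl\{\norm{(D^k)^{-1}\dot z},\norm{(D^k)\dot s}\bigr\}\le\left(1+\frac{\omega^f}{\sqrt{1-\theta}}\left(2\frac{\omega^o}{1-\beta}+1\right)\right)\sqrt{n\,(z^k)^\top s^k},
\end{equation*}
so I would simply set $C_0:=1+\tfrac{\omega^f}{\sqrt{1-\theta}}\bigl(2\tfrac{\omega^o}{1-\beta}+1\bigr)$.

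The only real obstacle is justifying why $C_0$ is independent of $n$: the quantities $\theta$ and $\beta$ are algorithm parameters fixed at initialization, while $\omega^f$ and $\omega^o$ depend on the problem data and the initial point $(x^0,\lambda^0,s^0)$ but not explicitly on the dimension. I would remark briefly on this at the end so that the reader sees that nothing hidden in Lemma~\ref{lemma_upper_norm_D_dot_with_x_s} scales with $n$; in particular the appeal to the neighborhood condition is uniform in $k$ because the iterates stay in $\Neighborhood(\theta)$ throughout Algorithm~\ref{algo_arc_restarting_for_proof} by Lemma~\ref{lemma_next_iteration_in_neighborhood}. No new analytic machinery is required beyond what has already been assembled.
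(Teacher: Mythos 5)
Your proposal is correct and matches the paper's intended argument: the paper disposes of this lemma by citing \cite[Lemma~6.7]{yang2020arc} with $x^k,\dot x$ replaced by $z^k,\dot z$, and the constant it records, $C_0 \ge 1 + \omega^f\bigl(2\tfrac{\omega^o}{1-\beta}+1\bigr)/\sqrt{1-\theta}$, is exactly the one you derive by combining Lemma~\ref{lemma_upper_norm_D_dot_with_x_s} with the neighborhood bound $z_i^k s_i^k \ge (1-\theta)\mu_k^z$. Your handling of the two branches of \eqref{def_restarted_point} to justify $(z^k,\lambda^k,s^k)\in\Neighborhood(\theta)$ is also the right way to close the one detail the citation leaves implicit.
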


Lemma~\ref{lemma_upper_norm_D_dot} can be proved by replacing $x^k$ and $\dot{x}$ in \cite[Lemma~6.7]{yang2020arc} by $z^k$ and $\dot{z}$.
In this case, $C_0$ is taken such that
$$
  C_0 \ge 1 + \omega^f \frac{\left(2 \frac{\omega^o}{1 - \beta} + 1\right)}{\sqrt{(1-\theta)}}.
$$

From Lemma~\ref{lemma_upper_norm_D_dot},
we can give an upper bound on the norm of the search directions.
\begin{lemma}
  \label{lemma_upper_derivative_norms}
  Let $(\dot{z}, \dot{\lambda}, \dot{s})$ and $(\ddot{z}, \ddot{\lambda}, \ddot{s})$ be calculated by
  \eqref{first_derivative} and \eqref{second_derivative}.
  Then, there are positive constants
  $C_{1}, C_{2}, C_{3}$, and $C_{4}$ satisfying
  \begin{align}
    &\norm{\dot{z} \circ \dot{s}} \le C_{1} n^{2} \mu_k^z, \label{upper_first_derivative_Hadamard_product}\\
    &\norm{\ddot{z} \circ \ddot{s}} \le C_{2} n^{4} \mu_k^z, \\
    &\max \left\{\norm{\left(D^{k}\right)^{-1} \ddot{z}}, \norm{\left(D^{k}\right) \ddot{s}}\right\}
      \le C_{3} n^{2} \sqrt{\mu_k^z}, \label{max_D_ddot_z_and_D_ddot_s_less_than_constant} \\
    &\max \{\norm{\ddot{z} \circ \dot{s}}, \norm{\dot{z} \circ \ddot{s}}\} \le C_{4} n^{3} \mu_k^z.
  \end{align}
In addition, $C_{1}, C_{2}, C_{3}$, and $C_{4}$ are independent of $n$.
\end{lemma}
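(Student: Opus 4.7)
The plan is to chain the bounds together using Lemma~\ref{lemma_upper_norm_D_dot} as the base estimate, the orthogonality built into systems \eqref{first_derivative}--\eqref{second_derivative}, and the standard Hadamard-product inequality $\|a\circ b\|\le \|a\|\,\|b\|$. Since $(z^k)^{\top}s^k = n\mu_k^z$, Lemma~\ref{lemma_upper_norm_D_dot} can be rewritten as $\max\{\|(D^k)^{-1}\dot z\|,\|(D^k)\dot s\|\}\le C_0\, n\sqrt{\mu_k^z}$, and the neighborhood property $(z^k,\lambda^k,s^k)\in\Neighborhood(\theta)$ (which follows from \eqref{def_restarted_point} together with Lemma~\ref{lemma_next_iteration_in_neighborhood}) supplies the componentwise lower bound $z_i^k s_i^k \ge (1-\theta)\mu_k^z$, hence $\min_i\sqrt{z_i^k s_i^k}\ge \sqrt{(1-\theta)\mu_k^z}$. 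These two facts will drive everything.

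First I would establish \eqref{upper_first_derivative_Hadamard_product} by writing $\dot z\circ\dot s = ((D^k)^{-1}\dot z)\circ((D^k)\dot s)$ and applying $\|a\circ b\|\le\|a\|\,\|b\|$ to get $\|\dot z\circ\dot s\|\le C_0^2 n^2\mu_k^z$, so we may take $C_1=C_0^2$. Next I would obtain \eqref{max_D_ddot_z_and_D_ddot_s_less_than_constant}. From the bottom block of \eqref{second_derivative}, $S^k\ddot z+Z^k\ddot s=-2\dot z\circ\dot s$, and dividing by $(Z^kS^k)^{1/2}$ componentwise gives
\begin{equation*}
(D^k)^{-1}\ddot z+(D^k)\ddot s = -2\,(Z^kS^k)^{-1/2}(\dot z\circ\dot s).
\end{equation*}
The top two block rows of \eqref{second_derivative} yield $A\ddot z=0$ and $A^{\top}\ddot\lambda+\ddot s=0$, so $\ddot z^{\top}\ddot s=0$, i.e.\ $((D^k)^{-1}\ddot z)^{\top}((D^k)\ddot s)=0$. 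Squaring the displayed identity gives
\begin{equation*}
\|(D^k)^{-1}\ddot z\|^2+\|(D^k)\ddot s\|^2 = 4\,\bigl\|(Z^kS^k)^{-1/2}(\dot z\circ\dot s)\bigr\|^2 \le \frac{4}{(1-\theta)\mu_k^z}\,\|\dot z\circ\dot s\|^2,
\end{equation*}
and substituting \eqref{upper_first_derivative_Hadamard_product} lets me conclude $\max\{\|(D^k)^{-1}\ddot z\|,\|(D^k)\ddot s\|\}\le \frac{2C_1}{\sqrt{1-\theta}}\,n^2\sqrt{\mu_k^z}$, so $C_3=2C_1/\sqrt{1-\theta}$.

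The remaining two inequalities are then routine consequences. Applying the Hadamard-product inequality again with the bound just obtained gives $\|\ddot z\circ\ddot s\|\le \|(D^k)^{-1}\ddot z\|\,\|(D^k)\ddot s\|\le C_3^2 n^4\mu_k^z$, so $C_2=C_3^2$. Finally, writing $\dot z\circ\ddot s=((D^k)^{-1}\dot z)\circ((D^k)\ddot s)$ and $\ddot z\circ\dot s=((D^k)^{-1}\ddot z)\circ((D^k)\dot s)$ and using Lemma~\ref{lemma_upper_norm_D_dot} together with \eqref{max_D_ddot_z_and_D_ddot_s_less_than_constant} produces the bound $\max\{\|\ddot z\circ\dot s\|,\|\dot z\circ\ddot s\|\}\le C_0 C_3\, n^3\mu_k^z$, so $C_4=C_0C_3$. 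Because $C_0$ (from Lemma~\ref{lemma_upper_norm_D_dot}) and $\theta$ are independent of $n$, so are $C_1,C_2,C_3,C_4$.

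The only real subtlety is verifying that $(z^k,\lambda^k,s^k)\in\Neighborhood(\theta)$ at the relevant iteration so that the componentwise lower bound $z_i^k s_i^k\ge (1-\theta)\mu_k^z$ is legitimate; this is exactly what the definition \eqref{def_restarted_point} enforces (combined with $x^k\in\Neighborhood(\theta)$ from Lemma~\ref{lemma_next_iteration_in_neighborhood} for the fallback branch). Everything else is linear-algebra bookkeeping once the bound on $\max\{\|(D^k)^{-1}\dot z\|,\|(D^k)\dot s\|\}$ is in hand, so I do not expect a genuine obstacle—the main thing to be careful about is to keep track of $\mu_k^z$ (not $\mu_k$) consistently and to ensure the constants are taken large enough to absorb the factor $1/\sqrt{1-\theta}$ coming from the neighborhood bound.
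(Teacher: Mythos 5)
Your proof is correct and follows essentially the same route as the argument the paper defers to (Yang's Lemma~4.4 in \cite{yang2018arc}, with $x,\mu_k$ replaced by $z,\mu_k^z$): chain the base bound $\max\{\|(D^k)^{-1}\dot z\|,\|(D^k)\dot s\|\}\le C_0 n\sqrt{\mu_k^z}$ through the orthogonality of the second-derivative system and the neighborhood lower bound $z_i^k s_i^k\ge(1-\theta)\mu_k^z$, using $\|a\circ b\|\le\|a\|\,\|b\|$. The constants you obtain ($C_1=C_0^2$, $C_3=2C_1/\sqrt{1-\theta}$, $C_2=C_3^2$, $C_4=C_0C_3$) are exactly the standard ones, and your remark that $(z^k,\lambda^k,s^k)\in\Neighborhood(\theta)$ is guaranteed by the construction \eqref{def_restarted_point} correctly addresses the only delicate point.
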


We can apply the same discussion as Yang~\cite[Lemma~4.4]{yang2018arc} to prove the above lemma,
by replacing $x$ and $\mu_k$ with $z$ and $\mu_k^z$, respectively.
The above lemma allows us to make an estimation regarding the lower bound of $\sin \left(\alpha_{k}\right)$.

\begin{lemma}
  \label{lemma_lower_bound_sin_alpha}
  Let the sequence $\{(x^k, \lambda^k, s^k)\}$ be generated by Algorithm~\ref{algo_arc_restarting_for_proof}.
  Then, $\alpha_{k}$ satisfies the following inequality:
  $$
    \sin \left(\alpha_{k}\right) \ge \frac{\theta}{2 C n},
  $$
  where $C=\max \left\{1, C_{4}^{\frac{1}{3}},\left(C_{1}+C_{2}\right)^{\frac{1}{4}}\right\}$.
\end{lemma}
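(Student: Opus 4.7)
The plan is to show that at any $\alpha$ with $\sin(\alpha) \leq \theta/(2Cn)$, all three defining conditions of $\alpha_k$ in \eqref{condition_step_size_for_proof} are automatically satisfied; consequently the largest such $\alpha_k$ is at least this value. Since the positivity conditions on $x(\alpha)$ and $s(\alpha)$ were already shown in Lemma~\ref{lemma_alpha_satisfying_x_s_positive} to hold as soon as \eqref{condition_step_size_neighborhood} holds, the crux of the argument is the neighborhood inequality \eqref{condition_step_size_neighborhood}.

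Building on the expansion of $x(\alpha)\circ s(\alpha)$ derived in the proof of Lemma~\ref{lemma_alpha_satisfying_x_s_positive}, I would start from the bound
\begin{equation*}
\norm{x(\alpha)\circ s(\alpha)-(1-\sin(\alpha))\mu_k^z e}
\leq \theta\mu_k^z(1-\sin(\alpha)) + \bigl(\norm{\ddot z\circ\ddot s}+\norm{\dot z\circ\dot s}\bigr)\sin^4(\alpha) + \bigl(\norm{\ddot z\circ\dot s}+\norm{\dot z\circ\ddot s}\bigr)\sin^3(\alpha).
\end{equation*}
So \eqref{condition_step_size_neighborhood} holds provided the sum of the last two terms is at most $\theta(1-\sin(\alpha))\mu_k^z$. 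Invoking the estimates in Lemma~\ref{lemma_upper_derivative_norms} (together with $n\geq 1$ to absorb $C_1 n^2$ into $(C_1+C_2)n^4$), it suffices to verify
\begin{equation*}
(C_1+C_2)n^4\sin^4(\alpha) + 2C_4 n^3\sin^3(\alpha) \leq \theta(1-\sin(\alpha)).
\end{equation*}

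Now I would plug in $\sin(\alpha)\leq \theta/(2Cn)$ with $C=\max\{1,C_4^{1/3},(C_1+C_2)^{1/4}\}$. The definition of $C$ gives $(C_1+C_2)/C^4\leq 1$ and $C_4/C^3\leq 1$, so the two error terms collapse to at most $\theta^4/16$ and $\theta^3/4$, respectively. Since $\theta<1/(2+\sqrt{2})<1$ and $Cn\geq 1$, one has $1-\sin(\alpha)\geq 1/2$; hence it remains to check the purely numerical inequality $\theta^3/16+\theta^2/4\leq 1/2$, which is immediate from $\theta<1/(2+\sqrt{2})$.

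The main obstacle, more conceptual than technical, is keeping the bookkeeping tight: the lemma uses $\mu_k^z$ and derivatives with respect to $z^k$, and the cubic/quartic dependence on $\sin(\alpha)$ must be matched exactly with the cubic/quartic structure of $C$, so I would be careful to use $n\geq 1$ to dominate the lower powers of $n$ uniformly. Once those inclusions are made, the maximality of $\alpha_k$ in \eqref{condition_step_size_for_proof} yields $\sin(\alpha_k)\geq \theta/(2Cn)$ as claimed.
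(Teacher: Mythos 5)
Your argument is correct and is essentially the proof the paper has in mind: the paper defers to Yang's Lemma 4.5 in \cite{yang2018arc}, which proceeds exactly as you do — bound the deviation $\norm{x(\alpha)\circ s(\alpha)-(1-\sin(\alpha))\mu_k^z e}$ via the expansion from Lemma~\ref{lemma_alpha_satisfying_x_s_positive}, insert the estimates of Lemma~\ref{lemma_upper_derivative_norms}, use the definition of $C$ to absorb the constants, and conclude from the maximality of $\alpha_k$. Your bookkeeping (using $n\ge 1$ to dominate $C_1 n^2$ by $(C_1+C_2)n^4$, and $1-\sin(\alpha)\ge 1/2$) checks out.
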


The proof of Lemma~\ref{lemma_lower_bound_sin_alpha} can be done
in a similar way to \cite[Lemma~4.5]{yang2018arc} with a replacement $\mu_k$ by $\mu_k^z$.

Lemma~\ref{lemma_lower_bound_sin_alpha} enables to establish polynomial-time complexity from the same argument as in \cite{yang2018arc}.
We consider the following stopping criterion at Step 1 of Algorithm~\ref{algo_arc_restarting_for_proof}:
\begin{equation}
  \label{stopping_critera}
  \mu_k \le \epsilon, \quad
  \norm{r_b(x^k)} \le \frac{\left\|r_b(x^0)\right\|}{\mu_0} \epsilon, \quad
  \norm{r_c(\lambda^k, s^k)} \le \frac{\left\|r_c(\lambda^0, s^0)\right\|}{\mu_0} \epsilon.
\end{equation}

\begin{theorem}
  \label{theorem_polynomiality}

  Let $\{(x^k, \lambda^k, s^k)\}$ be generated by Algorithm~\ref{algo_arc_restarting_for_proof}
  with an initial point given by \eqref{condition_initial_and_optim_point} and
  $L=\ln(\mu^0 / \epsilon)$ for a given $\epsilon>0$.
  Then,
  the algorithm will terminate with $(x^k, \lambda^k, s^k)$
  satisfying \eqref{stopping_critera}
  in at most $\Order(n L)$ iterations.
\end{theorem}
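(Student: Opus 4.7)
The plan is to turn the uniform lower bound on $\sin(\alpha_k)$ from Lemma~\ref{lemma_lower_bound_sin_alpha} into a geometric decay rate for both the duality measure and the two residual norms, and then invert this rate to obtain the iteration count $\Order(nL)$. This is the standard final step of a polynomial-complexity argument for infeasible interior-point methods.

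First, I would collect the multiplicative decrement formulas already proved. From Lemma~\ref{lemma_mu_decreasing} and the definition \eqref{def_nu} of $\nu_k$, iterating gives $\mu_k = \nu_k \mu_0$. From Theorem~\ref{theorem_residual_decreasing}, the same $\nu_k$ controls the dual residual exactly, $r_c(\lambda^k,s^k) = \nu_k r_c(\lambda^0,s^0)$, and componentwise dominates the primal residual, $|r_b(x^k)_j| \le \nu_k |r_b(x^0)_j|$ for each $j$, hence $\norm{r_b(x^k)} \le \nu_k \norm{r_b(x^0)}$. Thus checking the three stopping conditions in \eqref{stopping_critera} reduces to checking $\nu_k \le \epsilon / \mu_0$, since each condition is of the form ``initial value times $\nu_k \le$ initial value times $\epsilon/\mu_0$'' (with $\mu_k \le \epsilon$ coming from $\mu_k = \nu_k \mu_0$).

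Next, I would plug in the bound $\sin(\alpha_i) \ge \theta/(2Cn)$ from Lemma~\ref{lemma_lower_bound_sin_alpha} into the product defining $\nu_k$ to obtain
\[
\nu_k = \prod_{i=0}^{k-1}(1 - \sin(\alpha_i)) \le \left(1 - \frac{\theta}{2Cn}\right)^k.
\]
Taking logarithms, using the elementary inequality $\ln(1-x) \le -x$ for $x \in (0,1)$, the condition $\nu_k \le \epsilon/\mu_0$ is implied by
\[
k \cdot \frac{\theta}{2Cn} \ge \ln(\mu_0 / \epsilon) = L,
\]
i.e.\ $k \ge 2CnL/\theta$. Therefore after at most $\lceil 2CnL/\theta \rceil = \Order(nL)$ iterations, all three inequalities in \eqref{stopping_critera} hold simultaneously, and the algorithm terminates.

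There is essentially no hard step left; the entire heavy lifting went into proving Lemma~\ref{lemma_lower_bound_sin_alpha}, which in turn rested on bounding the first and second derivatives (Lemmas~\ref{lemma_upper_norm_D_dot} and \ref{lemma_upper_derivative_norms}) under the presence of the momentum term $z^k$. The only point that requires a small amount of care is ensuring that the primal residual bound uses $\nu_k$ rather than $\nu_k^\tau$: although \eqref{main_residual_norm_decreasing} only guarantees $r_b(z^k) = \tau_k \nu_k^\tau r_b(x^0)$, Theorem~\ref{theorem_residual_decreasing} shows the per-iteration shrinkage is at least $(1-\sin(\alpha_k))$, so $\norm{r_b(x^k)} \le \nu_k \norm{r_b(x^0)}$ holds, which is exactly the form needed to match the right-hand side in \eqref{stopping_critera}. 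Everything else is routine substitution.
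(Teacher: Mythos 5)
Your argument is correct and is exactly the standard geometric-decay argument that the paper invokes by reference to Wright's Theorem~3.2 and Yang--Yamashita: combine the lower bound $\sin(\alpha_k)\ge\theta/(2Cn)$ with the exact decrement formulas for $\mu_k$ and the residuals, then take logarithms using $\ln(1-x)\le -x$. Your remark about using the componentwise bound $\abs{r_b(x^k)_j}\le\nu_k\abs{r_b(x^0)_j}$ from Theorem~\ref{theorem_residual_decreasing} (rather than the $\nu_k^\tau$ form) is the right way to match the stopping criterion, so nothing is missing.
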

The proof of Theorem~\ref{theorem_polynomiality} is the essentially same as Wright~\cite[Theorem 3.2]{wright1997primal} as well as \cite[Theorem~4.1]{yang2018arc}.
Therefore,
Algorithm~\ref{algo_arc_restarting_for_proof} reaches a solution up to the stopping threshold $\epsilon$.

\subsection{Proof for Proposition~\ref{proposition_s_bounded_below_away_from_zero}}
Lastly, we give the proof of Proposition~\ref{proposition_s_bounded_below_away_from_zero} for nonsingularity of \eqref{matrix_need_nonsingularity}.
For this purpose,
since $(x^k, \lambda^k, s^k)$ is in the neighborhood $\Neighborhood(\theta)$,
we prove that there is a lower bound that is greater than 0 on $s^k$ by finding an upper bound for $x^k$.

\begin{lemma}
  \label{lemma_upper_x}
  There is a positive constant $\kappa$ such that
  $$
    \norm{x^k}_1 \le \kappa
  $$
  for each $k$.
\end{lemma}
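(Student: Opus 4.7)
The plan is to mimic the classical boundedness argument for infeasible interior-point iterates (see, e.g., Wright~\cite{wright1997primal} and \cite[Lemma~6.4]{yang2020arc}), exploiting the fact that even though the algorithm uses $z^k$ for search directions, the \emph{iterate} $x^k$ itself satisfies a clean residual identity $r_b(x^k) = \nu_k^\tau r_b(x^0)$ obtained by telescoping \eqref{main_residual_norm_decreasing_by_iteration}, together with $r_c(\lambda^k,s^k) = \nu_k r_c(\lambda^0,s^0)$ from Theorem~\ref{theorem_residual_decreasing}, and the duality measure identity $(x^k)^\top s^k = \nu_k (x^0)^\top s^0$ from \eqref{mu_decreasing}.

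First, I would pick the optimal solution $(x^*,\lambda^*,s^*)\in\SolSet$ that attains the minimum in \eqref{definition_omega_o}, and define the convex-combination auxiliary points
\begin{equation*}
  \hat{x} := \nu_k^\tau x^0 + (1-\nu_k^\tau) x^*, \qquad
  (\hat{\lambda},\hat{s}) := \nu_k(\lambda^0,s^0) + (1-\nu_k)(\lambda^*,s^*).
\end{equation*}
A direct computation then shows $A(\hat{x}-x^k)=0$ (because $A\hat{x} = b + \nu_k^\tau r_b(x^0) = b + r_b(x^k) = Ax^k$), and similarly $\hat{s}-s^k = -A^\top(\hat{\lambda}-\lambda^k)$. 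This orthogonality gives the key identity $(\hat{x}-x^k)^\top(\hat{s}-s^k)=0$, which, after expansion, becomes
\begin{equation*}
  \hat{x}^\top s^k + \hat{s}^\top x^k = \hat{x}^\top\hat{s} + (x^k)^\top s^k.
\end{equation*}

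Next, using $(x^*)^\top s^*=0$, the bounds $(x^0)^\top s^*,\,(x^*)^\top s^0\le \omega^o (x^0)^\top s^0$ from \eqref{upper_x_s_solution_product}, and $\omega^o\ge 1$, I would expand the right-hand side and drop the nonnegative terms $(1-\nu_k^\tau)(x^*)^\top s^k$ and $(1-\nu_k)(s^*)^\top x^k$ on the left to obtain
\begin{equation*}
  \nu_k (s^0)^\top x^k \;\le\; \bigl(\nu_k^\tau\nu_k + \nu_k^\tau(1-\nu_k) + (1-\nu_k^\tau)\nu_k\bigr)\omega^o (x^0)^\top s^0 + (x^k)^\top s^k.
\end{equation*}
Since $0\le\tau_i\le 1$ implies $\nu_k^\tau \le \nu_k$, the bracketed coefficient is at most $2\nu_k$, and substituting $(x^k)^\top s^k = \nu_k (x^0)^\top s^0$ and dividing by $\nu_k>0$ yields $(s^0)^\top x^k \le (2\omega^o+1)(x^0)^\top s^0$.

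Finally, since $x^k>0$ by Lemma~\ref{lemma_next_iteration_in_neighborhood} and $s^0>0$, I can bound the $\ell_1$-norm by
\begin{equation*}
  \|x^k\|_1 = \sum_i x_i^k \;\le\; \frac{(s^0)^\top x^k}{\min_i s_i^0} \;\le\; \frac{(2\omega^o+1)(x^0)^\top s^0}{\min_i s_i^0} =: \kappa,
\end{equation*}
which is the desired uniform bound. The main obstacle I anticipate is bookkeeping the asymmetry between $\nu_k^\tau$ (for primal residuals, which are modified by the momentum weight $\tau_k$) and $\nu_k$ (for dual residuals, which are not), and verifying that the coefficient $\nu_k^\tau+\nu_k-\nu_k^\tau\nu_k$ is controlled by $2\nu_k$ so that the $\nu_k$-division is legitimate; the degenerate case $\nu_k = 0$ corresponds to feasibility having been already attained and must be handled separately (it gives $r_b(x^k)=0$, so $x^k$ lies in the bounded primal feasible set under Assumption~\ref{assumption_for_optimal_solution}).
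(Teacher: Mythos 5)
Your proof is correct and follows essentially the same route as the paper's: the same auxiliary point $\nu_k^\tau x^0+(1-\nu_k^\tau)x^*$ paired with $\hat{s}$, the same orthogonality identity, dropping the same nonnegative terms, dividing by $\nu_k$, and converting $(s^0)^\top x^k$ into $\|x^k\|_1$ via $\xi=\min_i s_i^0$. The only immaterial difference is that you bound the cross terms $(x^0)^\top s^*$ and $(x^*)^\top s^0$ by $\omega^o (x^0)^\top s^0$ whereas the paper uses the H\"older-type bounds $\|x^0\|_\infty\|s^*\|_1$ and $\|x^*\|_1\|s^0\|_\infty$, giving a slightly different but equally valid constant $\kappa$; your explicit remark on the degenerate case $\nu_k=0$ is, if anything, more careful than the paper's own treatment.
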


The proof of Lemma~\ref{lemma_upper_x} is given similarly to \cite[Lemma 6.3]{wright1997primal}.

\begin{proof}
  Similarly to \eqref{equality_zero_product_of_feasible_x_s}, we obtain
  $$
    (\nu_k^\tau x^0 + (1 - \nu_k^\tau) x^* - x^k)^\top (\hat{s} - s^k) = 0,
  $$
  where $\hat{s}$ is defined by \eqref{def_hat_lambda_s}.
  Therefore,
  \begin{align*}
    \nu_k (x^k)^\top s^0 + \nu_k^\tau (x^0)^\top s^k
    & = \nu_k^\tau \nu_k (x^0)^\top s^0 + \nu_k^\tau (1 - \nu_k) (x^0)^\top s^* + \nu_k (1 - \nu_k^\tau) (x^*)^\top s^0 + (x^k)^\top s^k \\
    & \quad -(1 - \nu_k^\tau)(x^*)^\top s^k -(1 - \nu_k)(x^k)^\top s^* + (1 - \nu_k)(1 - \nu_k^\tau) (x^*)^\top s^*\\
    {\left[\because  (x^*)^\top s^*=0\right]}
    & = \nu_k^\tau \nu_k (x^0)^\top s^0 + \nu_k^\tau (1 - \nu_k) (x^0)^\top s^* + \nu_k (1 - \nu_k^\tau) (x^*)^\top s^0 \\
    & \quad + (x^k)^\top s^k - (1 - \nu_k^\tau)(x^*)^\top s^k -(1 - \nu_k)(x^k)^\top s^* \\
    {[\because x^k, s^k, x^*, s^* \ge 0]}
    & \le \nu_k^\tau \nu_k (x^0)^\top s^0 + \nu_k^\tau (1 - \nu_k) (x^0)^\top s^* + \nu_k (1 - \nu_k^\tau) (x^*)^\top s^0 + (x^k)^\top s^k.
  \end{align*}
  Let us now define the constant $\xi$ by
  $$
    \xi = \min_i s^0_i.
  $$
  Because $s^0 > 0$, we have $\xi > 0$,
thus,
  $$
    \xi \norm{x^k}_1 \le \min_i (s_i^0) \norm{x^k}_1 \le (x^k)^\top s^0.
  $$
  From $\nu_k^\tau \ge 0$, $x^0, s^k \ge 0$, we have
  $$
    \nu_k (x^k)^\top s^0 + \nu_k^\tau (x^0)^\top s^k \ge \nu_k (x^k)^\top s^0 \ge \nu_k \xi \norm{x^k}_1,
  $$
  so it holds that
  $$
    \nu_k \xi \norm{x^k}_1
    \le \nu_k^\tau \nu_k n \mu_0 + \nu_k^\tau (1 - \nu_k) \norm{x^0}_\infty \norm{s^*}_1 + \nu_k (1 - \nu_k^\tau) \norm{x^*}_1 \norm{s^0}_\infty + n \mu_k.
  $$
  Furthermore,  \eqref{mu_decreasing} and  $\nu_k^\tau \le \nu_k \le 1$ lead to
  \begin{align*}
    \xi \norm{x^k}_1
    & \le \nu_k^\tau n \mu_0 + \frac{\nu_k^\tau}{\nu_k}(1 - \nu_k) \norm{x^0}_\infty \norm{s^*}_1 + (1 - \nu_k^\tau) \norm{x^*}_1 \norm{s^0}_\infty + n \mu_0\\
    & \le 2n \mu_0 + (1 - \nu_k) \norm{x^0}_\infty \norm{s^*}_1 + (1 - \nu_k^\tau) \norm{x^*}_1 \norm{s^0}_\infty \\
    & \le 2n \mu_0 + \norm{x^0}_\infty \norm{s^*}_1 + \norm{x^*}_1 \norm{s^0}_\infty.
  \end{align*}
  The proof is completed by setting
  $$
    \kappa = \xi^{-1} \left(2n \mu_0 + \norm{x^0}_\infty \norm{s^*}_1 + \norm{x^*}_1 \norm{s^0}_\infty\right).
  $$
\end{proof}

Based on the above, we prove the Proposition~\ref{proposition_s_bounded_below_away_from_zero}.

\begin{proof}[Proof of Proposition~\ref{proposition_s_bounded_below_away_from_zero}]
  From $(x^k, \lambda^k, s^k) \in \Neighborhood(\theta)$,
  we obtain
  $$
    x^k_i s^k_i \ge (1 - \theta) \mu_k.
  $$
  Since there is the constant $\kappa$ such that $x_i^k \le \kappa$ from Lemma~\ref{lemma_upper_x} and $x^k>0$,
  we have
  \begin{equation}
    \label{lower_bound_of_s}
    s^k_i \ge (1 - \theta) \mu_k \frac{1}{x^k_i} \ge (1 - \theta) \mu_k \frac{1}{\kappa} > 0.
  \end{equation}
  Therefore, Proposition~\ref{proposition_s_bounded_below_away_from_zero}
holds by $L_0 = (1-\theta) / \kappa$.
\end{proof}
\section{Numerical experiments}
\label{section_numerical_experiments}

We conducted numerical experiments to compare the proposed method
with the existing methods, i.e., arc-search method and line-search method.
In addition, we also investigate the effect of the restarting parameter $\beta$.
The numerical experiments were executed on a Linux server with Opteron 4386 (3.10GHz), 16 cores and 128~GB RAM.
We implemented the methods with Python 3.10.9.

In the following,
an existing arc-search interior-point method \cite{Yang2017} is called ``Arc-search'' and
an existing Mehortra type line-search interior-point method \cite{Mehrotra1992} ``Line-search''.
Both of these implementations are built upon the algorithms described in the referenced papers.

\subsection{Test problems}
For the test problems, we used the Netlib test repository~\cite{browne1995netlib}.
We chose 85 problems in the experiment
\footnote{
	We excluded
	the following problems, since they were too large or some infeasibility was detected at a preprocessing stage:
	80BAU3B,
	BLEND,
	CRE-B, CRE-D,
	DEGEN2,
	DFL001,
	E226,
	FIT2D, FIT2P,
	FORPLAN,
	GFRD-PNC, GROW7, GROW15,
	GROW22,
	KEN-11,
	KEN-13, KEN-18,
	NESM,
	OSA-07, OSA-14,
	OSA-30, OSA-60,
	PDS-06, PDS-10, PDS-20,
	QAP15,
	SCORPION,
	SIERRA,
	STOCFOR3.
}.

The Netlib instances are given in the following format:
\begin{align*}
	\min _{x_b} \quad & c_b^\top x_b \\
	\text{subject to } & A_E x_b = b_E, \
	A_G x_b \ge b_G, \
	A_L x_b \le b_L, \
	x_b \ge b_{LO}, \
	x_b \le b_{UP}.
\end{align*}
(There exist cases where $b_{LO} = b_{UP}$.)
To transform it into the form of (\ref{problem_main}), we employ the following setting:
\begin{equation*}
	x = \left[\begin{array}{c}
		x_b - b_{LO} \\ s_G \\ s_L \\ s_B
	\end{array}\right], \quad
	A = \left[\begin{array}{cccc}
		A_E & 0 & 0 & 0 \\
		A_G & -I & 0 & 0 \\
		A_L & 0 & I & 0 \\
		I_B & 0 & 0 & I
	\end{array}\right], \quad
	b = \left[\begin{array}{c}
		b_E - A_E b_{LO} \\
		b_G - A_G b_{LO} \\
		b_L - A_L b_{LO} \\
		b_{UP} - b_{LO}
	\end{array}\right], \quad
	c = \left[\begin{array}{c}
		c_b \\ 0 \\ 0 \\ 0
	\end{array}\right]
\end{equation*}
where $s_G$, $s_L$, and $s_B$ are slack variables for inequality constraints such that
\begin{align*}
	A_G x_b - s_G  = b_G, \
	A_L x_b + s_L  = b_L, \
	x_b + s_B  = b_{UP}.
\end{align*}
Though another setting like $x_b - s_{LO} = b_{LO}$ can be considered, the above setting has fewer constraints.

In addition, we apply the preprocessing described in \cite[Section 4.2]{Yang2017} to each problem.
Therefore,
the size $(n,m)$ of each problem in this paper is not always the same as the size reported at the Netlib webpage~\cite{browne1995netlib}.
The variable size $n$ ranges from 51 to 9,253,
and the number of constraints $m$ from 27 to 4,523.

\subsection{The modified algorithm}
\label{subsection_algorithms_for_comparison}
To improve practical performance of Algorithm~\ref{algo_arc_restarting_for_proof},
we modify it into  Algorithm~\ref{algo_arc_restarting_for_calculation}
with the Mehrotra type implementation~\cite{Mehrotra1992} similarly to \cite[Algorithm 3.1]{Yang2017}.
The comparison of numerical experiments in Section~\ref{subsection_comparison_between_algo_1_and_2} will show
that Algorithm~\ref{algo_arc_restarting_for_calculation} is numerically superior to Algorithm~\ref{algo_arc_restarting_for_proof}.
In the following, we discuss the main differences.

Based on the concept of Mehrotra type implementation, we slightly perturb the second derivative \eqref{second_derivative}
to prevent the step size from diminishing to zero:
\begin{equation}
	\label{second_derivative_for_calculation}
	\begin{bmatrix}
		A & 0 & 0 \\
		0 & A^\top & I \\
		S^k & 0 & Z^k
	\end{bmatrix} \left[\begin{array}{c}
		\ddot{z} \\
		\ddot{\lambda} \\
		\ddot{s}
	\end{array}\right]
	=
	\left[\begin{array}{c}
		0 \\
		0 \\
		\sigma_k \mu_k^z e - 2 \dot{z} \circ \dot{s}
	\end{array}\right].
\end{equation}
Here $\sigma_k \in (0, 0.5] $ is called the centering parameter,
and is selected in the same way as Mehrotra~\cite{Mehrotra1992}.

As for the setting of $\beta_k$,
we use two formulas of computation: the formula in \eqref{def_beta_for_proof} and
its simplified formula:
\begin{equation}
	\label{def_beta_for_calculation}
	\beta_k = \frac{\beta}{\norm{ X_k^{-1}\delta(x^k) }_\infty}.
\end{equation}
Although the term $\abs{\frac{r_b(x^k)_j}{r_b(x^k)_j - r_b(x^{k-1})_j}}$ was used in the proof,
we mainly use \eqref{def_beta_for_calculation} for setting $\beta_k$
because \eqref{def_beta_for_calculation} was more efficient in preliminary numerical experiments.
A comparison of the difference in $\beta_k$ will be reported
in Section~\ref{subsection_comparison_setting_of_beta_k}.

In addition, Algorithm~\ref{algo_arc_restarting_for_calculation} does not use the neighborhood $\Neighborhood(\theta)$, thus
$z^k$ is evaluated as follows:
\begin{equation}
	\label{def_restart_point_for_calculation}
	z^k = x^k + \beta_k \delta(x^k).
\end{equation}
In other words,
the momentum term is employed in all the iterations.

\begin{algorithm}[H]
  \caption{Mehrotra type arc-search interior-point method with Nesterov's restarting}
  \label{algo_arc_restarting_for_calculation}
  \begin{algorithmic}[1]
		\renewcommand{\algorithmicrequire}{\textbf{Input:}}
    \renewcommand{\algorithmicensure}{\textbf{Output:}}
    \Require an initial point $(x^0 > 0, \lambda^0, s^0 > 0)$,
			restarting parameter $\beta \in (0, 1)$.
    \State If $(x^k, \lambda^k, s^k)$ satisfies a stopping criterion, output $(x^k, \lambda^k, s^k)$ as an optimal solution and stop.
    \State Set $\beta_k$.
    \State Set $z^k$ using \eqref{def_restart_point_for_calculation} and $\mu_k^z$ using \eqref{def_duality_measure_with_restarting_point}.
    \State Calculate $(\dot{z}, \dot{\lambda}, \dot{s})$ using \eqref{first_derivative}, and set
			\begin{align*}
				\alpha^z_a &:=\arg \max \{\alpha \in[0,1] \mid z^k - \alpha \dot{z} \ge 0\} \\
				\alpha^s_a &:=\arg \max \{\alpha \in[0,1] \mid s^k - \alpha \dot{s} \ge 0\}.
			\end{align*}
    \State Calculate $\mu_a=\left(z^k - \alpha^z_a \dot{z}\right)^\top \left(s^k - \alpha^s_a \dot{s} \right) / n$ and
      the centering parameter $\sigma_k=\left(\mu_a / \mu_k^z\right)^{3}$.
    \State Calculate $(\ddot{z}, \ddot{\lambda}, \ddot{s})$ using (\ref{second_derivative_for_calculation}) and set
      \begin{subequations}
				\label{def_alpha_by_variables}
        \begin{align}
          \alpha^z_{\max} &=\arg \max \left\{\alpha \in\left[0, \frac{\pi}{2}\right] \mid z^{k}-\dot{z} \sin (\alpha)+\ddot{z}(1-\cos (\alpha)) \ge 0\right\} \\
          \alpha^s_{\max} &=\arg \max \left\{\alpha \in\left[0, \frac{\pi}{2}\right] \mid s^{k}-\dot{s} \sin (\alpha)+\ddot{s}(1-\cos (\alpha)) \ge 0\right\}.
        \end{align}
      \end{subequations}
		\State If
			\begin{subequations}
				\begin{align*}
					x^{k+1}_{\max} & = z^{k}-\dot{z} \sin \left(\alpha^z_{\max}\right)+\ddot{z}\left(1-\cos \left(\alpha^z_{\max}\right)\right) \ge 0\\
					\lambda^{k+1}_{\max} & = \lambda^{k}-\dot{\lambda} \sin \left(\alpha^s_{\max}\right)+\ddot{\lambda}\left(1-\cos \left(\alpha^s_{\max}\right)\right) \\
					s^{k+1}_{\max} & = s^{k}-\dot{s} \sin \left(\alpha^s_{\max}\right)+\ddot{s}\left(1-\cos \left(\alpha^s_{\max}\right)\right) \ge 0
				\end{align*}
			\end{subequations}
			satisfies the stopping criterion, output $(x^{k+1}_{\max}, \lambda^{k+1}_{\max}, s^{k+1}_{\max})$ as an optimal solution and stop.
    \State Set step scaling factor $\gamma_k \in (0, 1)$,
			scale step size $\alpha_k^z = \gamma_k \alpha^z_{\max}$, $\alpha_k^s = \gamma_k \alpha^s_{\max}$ and update
      \begin{subequations}
        \label{update_formula_restarting}
        \begin{align}
          x^{k+1} & = z^{k}-\dot{z} \sin \left(\alpha_k^z\right)+\ddot{z}\left(1-\cos \left(\alpha_k^z\right)\right) > 0 \\
          \lambda^{k+1} & = \lambda^{k}-\dot{\lambda} \sin \left(\alpha_k^s\right)+\ddot{\lambda}\left(1-\cos \left(\alpha_k^s\right)\right) \\
          s^{k+1} & = s^{k}-\dot{s} \sin \left(\alpha_k^s\right)+\ddot{s}\left(1-\cos \left(\alpha_k^s\right)\right) > 0.
        \end{align}
      \end{subequations}
    \State Set $\mu_{k+1} = (x^{k+1})^\top s^{k+1} / n$. $k \leftarrow k+1$. Go back to Step 1.
  \end{algorithmic}
\end{algorithm}

\subsection{Implementation details}
In the following, we discuss more details on implementation.
In Algorithm~\ref{algo_arc_restarting_for_proof}, we set $\theta=0.25$.

\subsubsection{Initial points}\label{subsection_initial_point}
In Algorithm~\ref{algo_arc_restarting_for_proof}, the components of $x^0$ and $s^0$ are all set to 100 and those of $\lambda^0$ is all set to 0
to ensure that the initial point $(x^0, \lambda^0, s^0)$ is in neighborhood $\Neighborhood(\theta)$.
In contrast,
the initial point is set the same as Yang~\cite[Section 4.1]{Yang2017} in Algorithm~\ref{algo_arc_restarting_for_calculation},
Arc-search and Line-search.

\subsubsection{Step size}
In Algorithm~\ref{algo_arc_restarting_for_proof},
since it is difficult to obtain the solution of \eqref{condition_step_size_for_proof} analytically,
Armijo's rule~\cite{wright1997primal} is employed to determine an actual step size $\alpha_k$ that satisfies \eqref{condition_step_size_for_proof}.
In Algorithm~\ref{algo_arc_restarting_for_calculation},
we decide $\alpha^z_{\max}$ and $\alpha^s_{\max}$ by \eqref{def_alpha_by_variables} on the same strategy as Yang~\cite[Section 4.8]{Yang2017}.

The step scaling factor $\gamma_k$ in Algorithm~\ref{algo_arc_restarting_for_calculation} is calculated as
$\gamma_k = 0.9$ for guaranteeing the positiveness of $x^k$ and $s^k$.
Although Yang~\cite[Section 7.3.9]{yang2020arc} proposed the scaling factor of $\gamma_k = 1 - e^{-(k+2)}$,
it is not robust for numerical errors in our implementation because it converges to 1 too quickly.

\subsubsection{Stopping criteria}\label{section_stopping_criteria}
In Theorem~\ref{theorem_polynomiality},
we used \eqref{stopping_critera} as the stopping criterion for Algorithm~\ref{algo_arc_restarting_for_proof}.
However, the part $\mu_k \le \epsilon$ does not take the magnitude
of the  data into consideration, thus, it is not practical
especially when the magnitude of the optimal values is relatively large.
In addition,
\eqref{stopping_critera} depends on the initial point, though
we set different initial points for Algorithms~\ref{algo_arc_restarting_for_proof} and \ref{algo_arc_restarting_for_calculation} as described in Section~\ref{subsection_initial_point}.
Therefore,
we employed the following stopping criterion in the numerical experiment:
\begin{equation}
	\label{condition_solved}
	\max \left\{
		\frac{\norm{r_b(x^k)}}{\max \{1,\norm{b}\}},
		\frac{\norm{r_c(\lambda^k, s^k)}}{\max \{1,\norm{c}\}},
		\frac{\mu_k}{\max \left\{1,\norm{c^\top x^k},\norm{b^\top \lambda^k}\right\}}
	\right\} < \epsilon,
\end{equation}
where we set the threshold $\epsilon = 10^{-7}$.

In addition, we stopped the iteration immaturely when one of three conditions was detected;
(i) the iteration count $k$ reached 100,
(ii) the step size $\alpha_k$ was too small like $\alpha_k < 10^{-7}$,
or (iii) the linear systems \eqref{first_derivative},
\eqref{second_derivative} or \eqref{second_derivative_for_calculation} could not be solved accurately due to numerical errors.

\subsection{Numerical results}
We report numerical results as follows.
In Section~\ref{subsection_comparison_between_algo_1_and_2},
we compare Algorithms~\ref{algo_arc_restarting_for_proof} and \ref{algo_arc_restarting_for_calculation},
and show that Algorithm~\ref{algo_arc_restarting_for_calculation} is superior to Algorithm~\ref{algo_arc_restarting_for_proof} in the numerical experiments.
Therefore, we use Algorithm~\ref{algo_arc_restarting_for_calculation}
in Section~\ref{subsection_comparison_setting_of_beta_k} and thereafter.
Section~\ref{subsection_comparison_setting_of_beta_k} compares
the settings \eqref{def_beta_for_proof} or \eqref{def_beta_for_calculation}
for $\beta_k$ in Algorithm~\ref{algo_arc_restarting_for_calculation},
and
Section~\ref{subsection_comparison_beta} evaluates the effect of
the value of $\beta$.
Lastly,
Section~\ref{subsection_comparison_with_existing_methods}
compares the numerical performance of Algorithm~\ref{algo_arc_restarting_for_calculation}
with the existing methods, using the  formula \eqref{def_beta_for_calculation}
chosen from Section~\ref{subsection_comparison_setting_of_beta_k}
and
the best $\beta$ from Section~\ref{subsection_comparison_beta}.

\subsubsection{Comparison between Algorithm~\ref{algo_arc_restarting_for_proof} and Algorithm~\ref{algo_arc_restarting_for_calculation}}
\label{subsection_comparison_between_algo_1_and_2}
Table~\ref{table_comparison_solvable_problem_number} reports the number of Netlib problems solved
by Algorithms~\ref{algo_arc_restarting_for_proof} and \ref{algo_arc_restarting_for_calculation} with different settings of $\beta$.
The value of
$\beta_k$ is set by \eqref{def_beta_for_calculation} except a case of $\beta = 0.5$ in Algorithm~\ref{algo_arc_restarting_for_calculation}
that investigates both \eqref{def_beta_for_proof} and \eqref{def_beta_for_calculation}.
The table also shows the number of unsolved problems due to
(i) the iteration limit, (ii) the diminishing step length or (iii) numerical errors---see Section~\ref{section_stopping_criteria}.
\begin{table}[ht]
  \caption{The numbers of problems solved by
	Algorithm~\ref{algo_arc_restarting_for_proof} and Algorithm~\ref{algo_arc_restarting_for_calculation}}
  \label{table_comparison_solvable_problem_number}
  \centering
  \begin{tabular}{llrrr}
		\hline
		Solver & Settings &
		\begin{tabular}{c} Solved \\  \end{tabular} &
		\begin{tabular}{c} Unsolved \\  (i)  \end{tabular}  &
		\begin{tabular}{c} Unsolved \\ (ii) or (iii)  \end{tabular}
		\\
		\hline \hline
		Algorithm~\ref{algo_arc_restarting_for_proof}
		& $\beta=0$& 73 & 10 & 2 \\
		& $\beta=0.001$& 73 & 10 & 2\\
		& $\beta=0.5$& 72 & 10 & 3 \\
		& $\beta=0.999$& 73 & 10 & 3 \\
		& $\beta=1$& 73 & 10 & 3 \\
		\hline
		Algorithm~\ref{algo_arc_restarting_for_calculation}
		& $\beta=0.001$& 77 & 7 & 1 \\
		& $\beta=0.01$& 77 & 7 & 1 \\
		& $\beta=0.1$& 78 & 6 & 1 \\
		& $\beta=0.3$& 78 & 6 & 1 \\
		& $\beta=0.5$, Setting of $\beta_k$: \eqref{def_beta_for_proof}
		& 79 & 6 & 0 \\
		& $\beta=0.5$, Setting of $\beta_k$: \eqref{def_beta_for_calculation}
		& 78 & 5 & 2 \\
		& $\beta=0.7$& 78 & 6 & 1 \\
		& $\beta=0.9$& 79 & 5 & 1 \\
		& $\beta=0.99$& 77 & 6 & 2 \\
		& $\beta=0.999$& 78 & 6 & 1\\
		& $\beta=1$& 77 & 5 & 3 \\
		\hline
	\end{tabular}
\end{table}

From Table~\ref{table_comparison_solvable_problem_number},
we can see that
Algorithm~\ref{algo_arc_restarting_for_calculation} improves
the number of solvable problems from Algorithm~\ref{algo_arc_restarting_for_proof}.
When $\beta = 0.5$ and the setting of $\beta_k$ was \eqref{def_beta_for_proof},
Algorithms~\ref{algo_arc_restarting_for_proof} and \ref{algo_arc_restarting_for_calculation}
could not solve ten and six problems, respectively,
among the 85 problems due to the iteration limit.
As we will see soon in Figure~\ref{fig_comparison_by_algorithm},
Algorithm~\ref{algo_arc_restarting_for_calculation} can reduce the number of iterations compared to Algorithm~\ref{algo_arc_restarting_for_proof},
therefore, Algorithm~\ref{algo_arc_restarting_for_calculation} can reach more optimal solutions before the iteration limit as shown in Table~\ref{table_comparison_solvable_problem_number}.

In addition,
Table~\ref{table_comparison_solvable_problem_number} indicates
Algorithm~\ref{algo_arc_restarting_for_proof} with setting $\beta = 1$ does not affect the number of solvable problems remarkably,
despite the upper bounds on $\max \left\{\norm{\left(D^k\right)^{-1} \dot{z}},\norm{\left(D^k\right) \dot{s}}\right\}$
in Lemma~\ref{lemma_upper_norm_D_dot_with_x_s} diverges when we take $\beta \to 1$.
If $\beta=1$ in Algorithm~\ref{algo_arc_restarting_for_proof},
$x^k_i + \beta_k \delta(x^k)_i = 0$ may happen at some index $i$,
thus $(x^k + \beta_k \delta(x^k), \lambda^k, s^k) \notin \Neighborhood(\theta)$
holds and $z^k = x^k$ is selected in \eqref{def_restarted_point}.
As can be inferred from this,
the closer $\beta$ is to 1, the more likely $z^k=x^k$, and such an iteration is the same as the case of $\beta=0$.
Therefore, the divergence of $C_0$ is unlikely to occur in actual computation up to numerical errors.

The performance profile~\cite{Tits1996,gould2016note} on the number
of iterations in Algorithm~\ref{algo_arc_restarting_for_proof} with different $\beta$
is shown in Figure~\ref{fig_comparison_by_parameter_in_algo_proof}.
In the performance profiling,
the horizontal axis is a scaled value $\tau$ and
the vertical axis $P(r \le \tau)$ is the proportion of test problems.
For example, for the number of iterations,
$P(r \le \tau)$ corresponds
the percentage of test problems that were solved by less than a $\tau$ times the number of iterations of the best algorithm.
Simply speaking,
a good algorithm has a larger value $P(r \le \tau)$ from smaller $\tau$.
To output the performance profile, we used a Julia package~\cite{orban-benchmarkprofiles-2019}.
In Figure~\ref{fig_comparison_by_parameter_in_algo_proof},
the numbers of iterations with $\beta=0.999$ are the exactly same as those with $\beta=1$,
thus the plot for $\beta=0.999$ is hidden by that for  $\beta=1$ in the figure.

\begin{figure}[htb]
	\centering
	\includegraphics[scale=0.5]{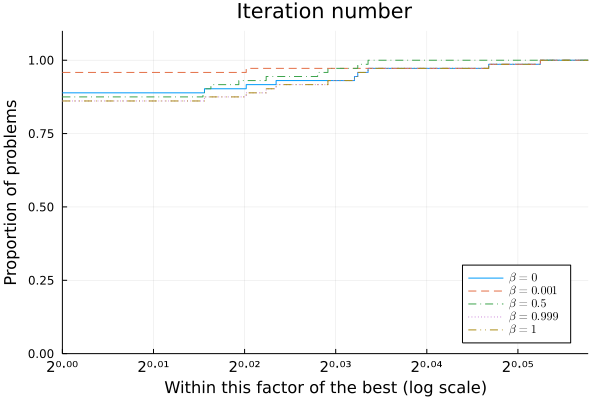}
	\caption{Performance profile of the number of iterations with different restarting parameters in Algorithm~\ref{algo_arc_restarting_for_proof}}
	\label{fig_comparison_by_parameter_in_algo_proof}
\end{figure}

Figure~\ref{fig_comparison_by_parameter_in_algo_proof}
shows that the momentum term improves the number of iterations when $\beta$ is 0.001.
In contrast, when $\beta$ is close to 1, 	$x^k + \beta_k \delta(x^k)$ tends to be out of the neighborhood $\Neighborhood(\theta)$, therefore, selecting $z^k = x^k$
by \eqref{def_restarted_point} in many iterations makes
the effect of the momentum term small.

Figure~\ref{fig_comparison_by_algorithm} compares
Algorithm~\ref{algo_arc_restarting_for_proof} and \ref{algo_arc_restarting_for_calculation} under the setting of $\beta = 0.001$.
Figure~\ref{fig_comparison_by_algorithm} indicates that
the result of Algorithm~\ref{algo_arc_restarting_for_calculation} is significantly better than that of Algorithm~\ref{algo_arc_restarting_for_proof},
since Algorithm~\ref{algo_arc_restarting_for_calculation} uses the momentum term through all the iterations.

\begin{figure}[htb]
	\centering
	\includegraphics[scale=0.5]{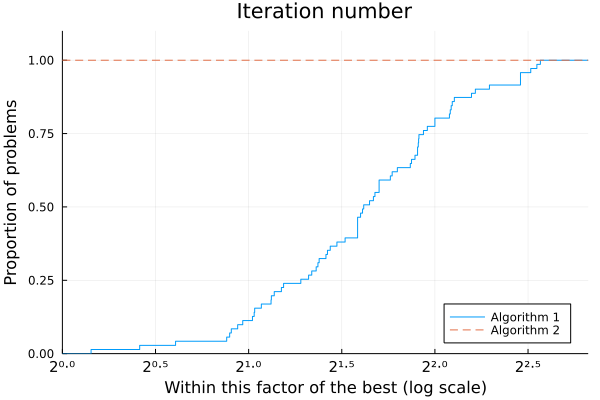}
	\caption{
		Performance profile of the number of iterations
		in Algorithms~\ref{algo_arc_restarting_for_proof} and \ref{algo_arc_restarting_for_calculation} with $\beta = 0.001$
	}
	\label{fig_comparison_by_algorithm}
\end{figure}

From the above result,
we use the results of Algorithm~\ref{algo_arc_restarting_for_calculation}
in the following numerical experiments instead of Algorithm~\ref{algo_arc_restarting_for_proof}.

\subsubsection{Effect of the choice of the weight of the momentum term}
\label{subsection_comparison_setting_of_beta_k}
Theorem~\ref{theorem_residual_decreasing} indicates that
$\beta_k$  by \eqref{def_beta_for_proof} can guarantee the decreasing of  $r_b(x^k)$,
thus
the number of solvable problems with \eqref{def_beta_for_proof} should be larger than \eqref{def_beta_for_calculation}.
In contrast,
the numerical result in Table~\ref{table_comparison_solvable_problem_number} shows that
Algorithm~\ref{algo_arc_restarting_for_calculation} with $\beta=0.5$ and the formula \eqref{def_beta_for_proof} is 79,
while the algorithm with $\beta=0.5$ and the formula \eqref{def_beta_for_calculation}
is 78, therefore, the difference in solvable problems between \eqref{def_beta_for_proof}
and \eqref{def_beta_for_calculation} is only one.

In Figure~\ref{fig_comparison_trajectories_for_KB2},
we plot the residuals and the duality measure
on a test instance KB2 solved with  $\beta_k$ in \eqref{def_beta_for_proof} and \eqref{def_beta_for_calculation}.
When we use  \eqref{def_beta_for_proof},
the primal residual $\norm{r_b(x^k)}_\infty$ decreases monotonically
in the first six iterations to a level of $10^{-8}$.
In contrast, when we use \eqref{def_beta_for_calculation},
 $\norm{r_b(x^k)}_\infty$ slightly increases at the fifth iteration.
However, both
\eqref{def_beta_for_proof} and \eqref{def_beta_for_calculation}
decrease $\norm{r_b(x^k)}_\infty$ sufficiently in 25 iterations.

\begin{figure}[ht]
	\begin{tabular}{cc}
		\begin{minipage}{0.55\textwidth}
			\includegraphics[width=\columnwidth]{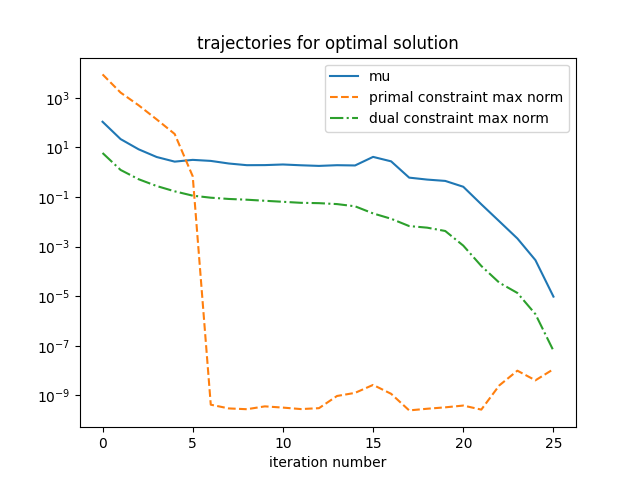}
		\end{minipage}
		\begin{minipage}{0.55\textwidth}
			\includegraphics[width=\columnwidth]{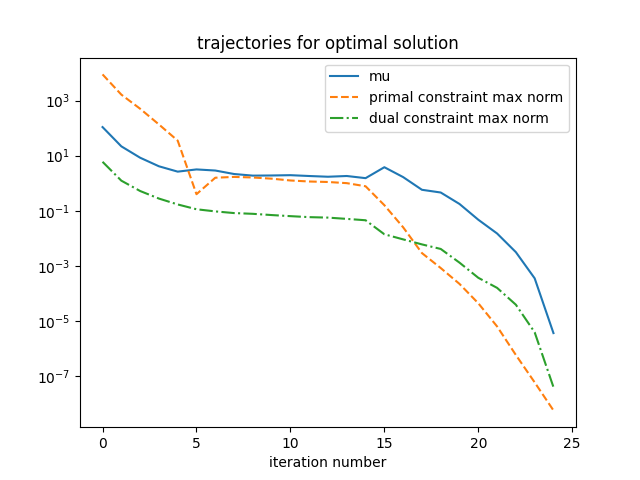}
		\end{minipage}
	\end{tabular}
	\caption{Trajectories for $\norm{r_b(x^k)}_\infty$. The left is the result with the setting of \eqref{def_beta_for_proof}, and the right is with \eqref{def_beta_for_calculation}.}
	\label{fig_comparison_trajectories_for_KB2}
\end{figure}

Figure~\ref{fig_comparison_by_setting_beta_k} shows the performance profile on the number of iterations, and we can see that \eqref{def_beta_for_calculation} is better.
\begin{figure}[ht]
	\centering
	\includegraphics[scale=0.4]{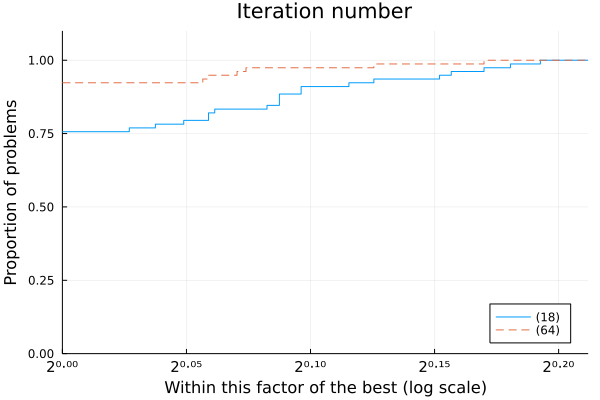}
	\caption{Performance profile of the number of iterations with different setting of $\beta_k$}
	\label{fig_comparison_by_setting_beta_k}
\end{figure}

\subsubsection{Numerical sensitivity of the restarting parameter}
\label{subsection_comparison_beta}
Here, we investigate influence of the restarting parameter $\beta$.

Figure~\ref{fig_comparison_by_parameter} shows the performance profiling on
the number of iterations of Algorithm~\ref{algo_arc_restarting_for_calculation}
with different $\beta$ from $0.001$ to $1$.
We can observe there that  larger $\beta$ tends to lead to
a slightly less number of iterations, and
this implies that a momentum term is effective to reduce the iterations.

In the following, we fix $\beta = 0.9$ in Algorithm~\ref{algo_arc_restarting_for_calculation}
which solves more problems in Table~\ref{table_comparison_solvable_problem_number},
and we compare Algorithm~\ref{algo_arc_restarting_for_calculation} with the existing methods.

\begin{figure}[ht]
	\centering
	\includegraphics[width=\linewidth]{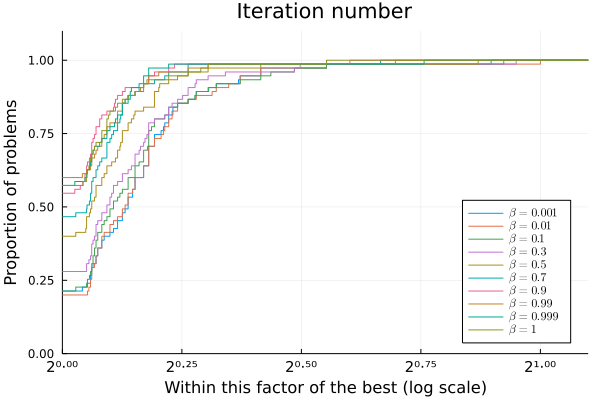}
	\caption{Performance profile of the number of iterations with different restarting parameters in Algorithm~\ref{algo_arc_restarting_for_calculation}}
	\label{fig_comparison_by_parameter}
\end{figure}

\subsubsection{Comparison with existing methods}
\label{subsection_comparison_with_existing_methods}
Figure~\ref{fig_comparison_with_existing_methods} shows a performance profile that compares Algorithm~\ref{algo_arc_restarting_for_calculation} and the two existing methods,
Arc-search~\cite{Yang2017} and Line-search~\cite{Mehrotra1992}.
The results in Figure~\ref{fig_comparison_with_existing_methods} indicates that
Algorithm~\ref{algo_arc_restarting_for_calculation} performs better than the two existing methods in terms of the number of iterations.

\begin{figure}[ht]
	\centering
	\includegraphics[scale=0.5]{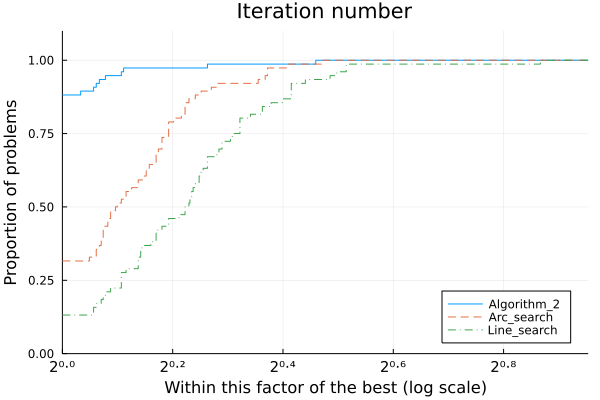}
	\caption{Performance profile of the number of iterations with Algorithm~\ref{algo_arc_restarting_for_calculation} and existing methods}
	\label{fig_comparison_with_existing_methods}
\end{figure}

The detailed numerical results of the three methods are reported in Table~\ref{table_results_for_comparison} of Appendix.
Among 76 problems in which all three methods found optimal solutions,
Algorithm~\ref{algo_arc_restarting_for_calculation} reduces the number of iterations in 45 problems compared to Arc-search and Line-search
(or finishes the iterations within at most the same numbers of iterations in 67 problems).

We also discuss the computational time.
Since the results of small problems are too short,
we calculate average computational times
on problems for which all the three methods spent 30 seconds or longer.
The average computational times are
252.51 in Algorithm~\ref{algo_arc_restarting_for_calculation},
272.73 in Arc-search, and
264.04 in Line-search,
therefore, Algorithm~\ref{algo_arc_restarting_for_calculation} reduce the average time by about 5.4\%.
\section{Conclusion}
\label{section_conclusion}
In this work,
we proposed an iterative method for LP problems by integrating
Nesterov's restarting strategy into the arc-search interior-point method.
In particular, we approximated the central path with an ellipsoidal arc
based on the point $z^k$ that is computed from $x^k$
with the momentum term $\delta(x^k)$.
By keeping all the iterations in the neighborhood $\Neighborhood(\theta)$,
we established the convergence of a generated sequence of the proposed method to an optimal solution and showed
the proposed method achieves the polynomial-time computational complexity.
In order to improve numerical performance,
we also proposed the modified method with a concept of the Mehrotra type interior-point method.
From the numerical experiments with the Netlib benchmark problems,
we observed that the modified method achieved better performance than existing
arc-search and line-search interior-point methods.

As a future direction,
we consider that Nesterov's restarting strategy in arc-search interior-point methods can be extended to more general types of problems,
such as second-order cone programming, semidefinite programming, linear constrained quadratic programming, and more general conic programming.
In particular,
there is room for further discussions on how the restarting strategy behaves in nonlinear-constrained cases.

In addition,
the multiple centrality correction discussed
in~\cite{gondzio1996multiple,Colombo2008} may also be combined to improve the numerical performance,
since the step size can be larger when the iteration points are closer to the central path.

\section*{Appendix}
\subsection*{Details on numerical results}
Table~\ref{table_results_for_comparison} reports the numerical results in Section~\ref{subsection_comparison_with_existing_methods}.
The first column of the table is the problem name, while
the second and the third are  the variable size $n$ and
the number of constraints $m$, respectively.
The fourth to ninth  columns reports
the numbers of iterations and the computation time of the methods.
A mark `-' indicates the unsolved problem due to (i), (ii) or (iii) in Section~\ref{section_stopping_criteria}.
or that the step size $\alpha_k$ is too small.
The results in bold face indicate the best results among the three methods.

\begin{longtable}{lrr|rr|rr|rr}
	\caption{Numerical results on Algorithm~\ref{algo_arc_restarting_for_calculation},
	Arc-search~\cite{Yang2017} and Line-search~\cite{Mehrotra1992}}
	\label{table_results_for_comparison} \\
	\hline
	problem name & $n$ & $m$ &
	\multicolumn{2}{c|}{Algorithm~\ref{algo_arc_restarting_for_calculation}} &
	\multicolumn{2}{c|}{Arc-search~\cite{Yang2017}} & \multicolumn{2}{c}{Line-search~\cite{Mehrotra1992}} \\
	& & &
	Itr. & Time & Itr. & Time & Itr. & Time \\
	\hline \hline
	25FV47 & 1835 & 780 & \textbf{23} & 24.13 & 24 & \textbf{23.33} & 27 & 28.97 \\
	ADLITTLE & 137 & 55 & \textbf{10} & \textbf{0.08} & 11 & \textbf{0.08} & 13 & \textbf{0.08} \\
	AFIRO & 51 & 27 & \textbf{7} & \textbf{0.03} & 8 & \textbf{0.03} & 9 & \textbf{0.03} \\
	AGG & 476 & 388 & \textbf{31} & 14.74 & 40 & 16.23 & \textbf{31} & \textbf{12.03} \\
	AGG2 & 755 & 514 & \textbf{21} & \textbf{10.46} & 25 & 11.35 & 25 & 11.98 \\
	AGG3 & 755 & 514 & \textbf{23} & 11.39 & \textbf{23} & \textbf{9.81} & 28 & 13.56 \\
	BANDM & 371 & 216 & \textbf{15} & 4.76 & 17 & \textbf{3.89} & 19 & 5.43 \\
	BEACONFD & 167 & 77 & \textbf{7} & \textbf{0.07} & 8 & 0.08 & 10 & \textbf{0.07} \\
	BNL1 & 1496 & 611 & 44 & 32.82 & 55 & 37.29 & \textbf{43} & \textbf{31.19} \\
	BNL2 & 4335 & 2209 & \textbf{28} & \textbf{184.42} & 31 & 202.27 & 33 & 217.67 \\
	BOEING1 & 856 & 490 & \textbf{20} & \textbf{10.92} & 22 & 10.95 & 28 & 12.72 \\
	BOEING2 & 333 & 194 & \textbf{21} & \textbf{4.81} & 22 & 7.35 & 26 & 8.08 \\
	BORE3D & 128 & 71 & 19 & 0.16 & \textbf{18} & 0.14 & 19 & \textbf{0.12} \\
	BRANDY & 227 & 122 & \textbf{19} & 4.75 & 20 & 3.52 & 22 & \textbf{2.78} \\
	CAPRI & 480 & 275 & \textbf{23} & 7.96 & \textbf{23} & 6.18 & 24 & \textbf{6.1} \\
	CRE-A & 6997 & 3299 & \textbf{25} & 569.99 & \textbf{25} & \textbf{557.27} & 26 & 576.95 \\
	CRE-C & 5684 & 2647 & \textbf{26} & \textbf{324.25} & 29 & 361.66 & 28 & 342.03 \\
	CYCLE & 3123 & 1763 & \textbf{24} & \textbf{71.98} & \textbf{24} & 73.74 & 26 & 77.14 \\
	CZPROB & 2786 & 678 & \textbf{29} & \textbf{45.91} & 34 & 55.68 & 32 & 50.73 \\
	D2Q06C & 5807 & 2147 & \textbf{29} & \textbf{325.99} & 30 & 343.07 & 34 & 388.62 \\
	D6CUBE & 6183 & 403 & \textbf{20} & 162.82 & \textbf{20} & 158.59 & \textbf{20} & \textbf{156.83} \\
	DEGEN3 & 2604 & 1503 & \textbf{15} & \textbf{30.84} & 17 & 33.85 & 18 & 36.45 \\
	ETAMACRO & 771 & 436 & 24 & 9.46 & 24 & 10.27 & \textbf{23} & \textbf{9.31} \\
	FFFFF800 & 990 & 486 & - & - & - & - & \textbf{59} & \textbf{31.06} \\
	FINNIS & 976 & 465 & \textbf{17} & \textbf{7.87} & 18 & 7.99 & 21 & \textbf{7.87} \\
	FIT1D & 2075 & 1050 & 27 & 30.89 & \textbf{25} & \textbf{30.11} & 30 & 36.29 \\
	FIT1P & 2076 & 1026 & \textbf{17} & \textbf{20.05} & 22 & 26.22 & 31 & 35.83 \\
	GANGES & 1753 & 1356 & \textbf{14} & \textbf{16.1} & 17 & 20.52 & 20 & 22.77 \\
	GREENBEA & 4536 & 2187 & - & - & - & - & \textbf{53} & \textbf{378.63} \\
	GREENBEB & 4524 & 2182 & \textbf{31} & \textbf{218.74} & 36 & 253.29 & 37 & 259.54 \\
	ISRAEL & 309 & 167 & 22 & 5.77 & \textbf{21} & \textbf{4.62} & 24 & 4.69 \\
	KB2 & 77 & 52 & \textbf{24} & 0.15 & \textbf{24} & 0.14 & \textbf{24} & \textbf{0.11} \\
	LOTFI & 357 & 144 & \textbf{15} & 3.46 & \textbf{15} & 3.34 & 17 & \textbf{3.07} \\
	MAROS & 1510 & 713 & \textbf{29} & \textbf{22.03} & 31 & 22.53 & 32 & 25.23 \\
	MAROS-R7 & 7448 & 2156 & \textbf{12} & \textbf{237.6} & 13 & 256.78 & 16 & 314.89 \\
	MODSZK1 & 1621 & 685 & \textbf{22} & 19.02 & 24 & \textbf{18.93} & 29 & 24.68 \\
	PDS-02 & 9253 & 4523 & \textbf{20} & \textbf{1031.95} & - & - & 22 & 1112.86 \\
	PEROLD & 1650 & 764 & \textbf{34} & \textbf{28.35} & 41 & 32.11 & \textbf{34} & 28.93 \\
	PILOT & 5348 & 2173 & 56 & 538.86 & 72 & 694.07 & \textbf{52} & \textbf{502.17} \\
	PILOT-WE & 3145 & 951 & \textbf{38} & \textbf{81.64} & - & - & - & - \\
	PILOT4 & 1393 & 628 & \textbf{94} & \textbf{70.9} & - & - & - & - \\
	PILOT87 & 7776 & 3365 & \textbf{39} & \textbf{1111.14} & 44 & 1214.68 & 42 & 1150.4 \\
	PILOTNOV & 2315 & 1040 & \textbf{19} & \textbf{27.18} & 20 & 27.63 & 20 & 27.59 \\
	RECIPELP & 203 & 117 & \textbf{9} & 2.3 & \textbf{9} & 1.94 & 11 & \textbf{0.35} \\
	SC105 & 162 & 104 & \textbf{9} & 2.85 & 10 & 2.86 & 11 & \textbf{0.49} \\
	SC205 & 315 & 203 & \textbf{11} & 3.38 & 13 & 3.94 & 13 & \textbf{1.05} \\
	SC50A & 77 & 49 & \textbf{8} & 0.05 & \textbf{8} & \textbf{0.04} & 9 & \textbf{0.04} \\
	SC50B & 76 & 48 & \textbf{7} & 0.05 & \textbf{7} & \textbf{0.04} & 8 & \textbf{0.04} \\
	SCAGR25 & 572 & 372 & \textbf{16} & 6.72 & 17 & \textbf{6.56} & 19 & 6.7 \\
	SCAGR7 & 158 & 102 & \textbf{12} & \textbf{0.74} & \textbf{12} & 2.99 & 15 & 2.55 \\
	SCFXM1 & 567 & 304 & \textbf{17} & 6.36 & 18 & 6.63 & 20 & \textbf{4.64} \\
	SCFXM2 & 1134 & 608 & \textbf{20} & 13.79 & 21 & \textbf{12.64} & 22 & 15.85 \\
	SCFXM3 & 1701 & 912 & \textbf{20} & 18.69 & 21 & \textbf{18.53} & 22 & 19.72 \\
	SCRS8 & 1202 & 424 & \textbf{19} & \textbf{10.22} & 20 & 12.23 & 21 & 11.19 \\
	SCSD1 & 760 & 77 & \textbf{8} & \textbf{0.42} & 9 & 0.46 & 9 & 0.5 \\
	SCSD6 & 1350 & 147 & \textbf{10} & 5.21 & \textbf{10} & 5.28 & 12 & \textbf{5.05} \\
	SCSD8 & 2750 & 397 & \textbf{9} & \textbf{11.75} & 10 & 22.83 & 12 & 15.18 \\
	SCTAP1 & 660 & 300 & \textbf{16} & 6.43 & 18 & 5.71 & 17 & \textbf{4.26} \\
	SCTAP2 & 2500 & 1090 & \textbf{13} & \textbf{20.46} & \textbf{13} & 21.9 & 14 & 24.09 \\
	SCTAP3 & 3340 & 1480 & \textbf{13} & \textbf{42.16} & 14 & 44.53 & 14 & 43.32 \\
	SEBA & 1345 & 890 & \textbf{17} & \textbf{13.09} & \textbf{17} & 13.17 & 19 & 14.95 \\
	SHARE1B & 243 & 107 & 26 & 6.63 & \textbf{25} & 6.8 & 26 & \textbf{1.19} \\
	SHARE2B & 161 & 95 & \textbf{12} & \textbf{0.12} & 14 & 0.15 & 15 & \textbf{0.12} \\
	SHELL & 1520 & 529 & \textbf{20} & \textbf{14.62} & \textbf{20} & 15.01 & 21 & 15.82 \\
	SHIP04L & 1963 & 325 & \textbf{12} & 11.44 & \textbf{12} & \textbf{11.08} & 15 & 13.6 \\
	SHIP04S & 1323 & 241 & \textbf{12} & 6.41 & 13 & \textbf{6.34} & 15 & 6.68 \\
	SHIP08L & 3225 & 526 & \textbf{14} & \textbf{26.66} & 15 & 28.56 & 17 & 33.0 \\
	SHIP08S & 1688 & 322 & \textbf{12} & \textbf{9.31} & 14 & 10.78 & 16 & 11.26 \\
	SHIP12L & 4274 & 664 & \textbf{15} & \textbf{54.67} & 17 & 60.97 & 20 & 72.1 \\
	SHIP12S & 2038 & 390 & \textbf{14} & \textbf{12.91} & 16 & 14.57 & 19 & 16.11 \\
	STAIR & 387 & 205 & \textbf{17} & \textbf{4.49} & 22 & 7.96 & 20 & 5.74 \\
	STANDATA & 1273 & 395 & \textbf{12} & \textbf{7.49} & \textbf{12} & 9.54 & 14 & 8.31 \\
	STANDGUB & 1273 & 395 & \textbf{12} & \textbf{7.23} & \textbf{12} & 7.66 & 14 & 9.29 \\
	STANDMPS & 1273 & 503 & \textbf{14} & \textbf{8.59} & 16 & 10.32 & 18 & 12.87 \\
	STOCFOR1 & 139 & 91 & \textbf{17} & \textbf{0.16} & \textbf{17} & \textbf{0.16} & 24 & 0.18 \\
	STOCFOR2 & 2868 & 1980 & \textbf{28} & 83.62 & \textbf{28} & 85.24 & \textbf{28} & \textbf{83.08} \\
	TRUSS & 8806 & 1000 & \textbf{16} & \textbf{375.51} & 17 & 385.02 & 19 & 430.57 \\
	TUFF & 619 & 307 & \textbf{24} & \textbf{7.42} & 27 & 11.82 & 27 & 8.43 \\
	VTP-BASE & 155 & 103 & \textbf{20} & 3.71 & 23 & \textbf{2.26} & 22 & 7.82 \\
	WOOD1P & 1802 & 171 & 36 & 29.3 & 35 & 25.88 & \textbf{30} & \textbf{20.18} \\
	WOODW & 5368 & 712 & 55 & 355.49 & 53 & 333.81 & \textbf{40} & \textbf{246.85} \\
	\hline
\end{longtable}

\bibliographystyle{abbrv}
\bibliography{scholar}

\end{document}